\newtheorem{thm}{Theorem}[section]
\newtheorem{lemma}[thm]{Lemma}
\newtheorem{cor}[thm]{Corollary}
\newtheorem{prop}[thm]{Proposition}
\newtheorem{conj}[thm]{Conjecture}
\theoremstyle{definition}
\newtheorem{example}[thm]{Example}
\newtheorem{defn}[thm]{Definition}
\numberwithin{equation}{section}
\newcommand{\ring}[1]{\ensuremath{\mathbb{#1}}}
\newcommand\ZZ{\ring{Z}}
\renewcommand\iff{\Leftrightarrow}
\newcommand\set[1]{\left\{#1\right\}}
\DeclareMathOperator\coker{coker}
\DeclareMathOperator{\diag}{diag}
\DeclareMathOperator{\Span}{span}
\title[Smith Normal Forms of Graphical Hermite Simplices]{Smith Normal Forms of Graphical Hermite Simplices}
\author[B.~Braun]{Benjamin Braun}
\address{Mathematics Department\\University of Kentucky\\Lexington, KY 40506}
\email{benjamin.braun@uky.edu}
\author[A.~Park]{Antwon Park}
\address{Mathematics Department\\University of Kentucky\\Lexington, KY 40506}
\email{antwon.park@uky.edu}
\date{22 June 2026}
\begin{document}

\begin{abstract}
We introduce the family of directed graphical simplices and study the Smith normal forms of their matrices of vertex vectors, which is equivalent to studying the group structure of the cokernels for these matrices.
Our motivation is to study the behavior of lattice simplices subject to small lattice perturbations of their vertices.
In this case, a directed graphical simplex is a perturbation of a rectangular simplex, i.e., a simplex defined by a diagonal matrix and the origin, with the perturbation controlled by the structure of a directed graph.
We first establish sufficient conditions on the graphs and diagonal entries of these matrices that imply having a single non-unit invariant factor, i.e., a cyclic cokernel.
We then obtain bounds on the invariant factors of the defining matrices related to lengths of paths in the corresponding directed graph.
\end{abstract}

\thanks{The first author was partially supported by US National Science Foundation award DMS-2450299.}

\maketitle

\section{Introduction}
\label{sec:introduction}

The study of geometric and algebraic properties of lattice simplices, including those presented by matrices in Hermite normal form, has been of significant recent interest~\cite{abendschymura,localhstarpaper,LaplacianSimplicesDigraphs,triangulationslecturehall,braundavisanti,BraunDavisReflexive,braundavishanelylanesolus,
	BraunDavisSolusIDP,braunhanely,BraunLiu,braunolsenehrhartlimits,rectsimppaper,hibihermite,selfdualreflexivesimplices,gorensteinproperties,gorensteingivendelta,
	higashitanicounterexamples,
	higashitaniprime,
	higashitanisimplicesmaxdimension,
	universalsimplices,
	SolusNumeralSystems,tsuchiyagorenstein}.
Bruns and Gubeladze~\cite{rectsimppaper} studied algebraic properties of rectangular simplices, i.e., simplices obtained as the convex hull of the zero vector and the columns of a diagonal matrix with positive integer entries.
These are in one sense the simplest matrices in Hermite normal form, yet the arithmetic structure of the integer points in the simplicial cone over a rectangular simplex is complicated.
Various simplices of recent interest can be thought of as small lattice perturbations of a rectangular simplex, for example, lecture hall simplices after a suitable unimodular transformation~\cite[Section 3]{triangulationslecturehall}.

Motivated by this observation, we initiate the study of a family of simplices that we refer to as directed graphical simplices. 
These are simplices obtained as small lattice perturbations of a rectangular simplex, where the perturbation is determined by the structure of an acyclic directed graph.
A special subfamily of these are the graphical Hermite simplices, obtained when the edges of the acyclic directed graph is directed towards the vertex with the larger label.
Our goal in this paper is to study the structure of the cokernel of the matrix of vertices for such a simplex, which is done by studying the Smith normal form of the matrix.
The study of the Smith normal form of an integer matrix, or equivalently of the cokernel, is an important topic both within and outside the context of lattice simplices~\cite{distinguishinggraphsnf,criticalstarscomplete,oaxacaarithstructures,criticalsignedgraphs,criticalstarclique,2025picardjacobian,snfdifferential,stanleysnfcomb,wangstanley}.

Our main results and the structure of this paper are as follows.
In Section~\ref{sec:snf} we review basic properties of Smith and Hermite normal forms and define directed graphical simplices.
In Section~\ref{sec:cyclic_cokernel}, we identify in Theorem~\ref{thm:cyclic_cokernel} properties of both the diagonal values and the directed graph that yield cyclic cokernels, or equivalently, that yield a single non-unit elementary divisor in the Smith normal form.
In Section~\ref{sec:constant_diagonal}, we consider the case of directed graphical simplices with constant diagonal values.
We prove in Theorem~\ref{thm:largest_bound} an upper bound on the largest elementary divisor in this case.

\section{Smith normal form and graphical simplices}
\label{sec:snf}

Let $[n] := \set{1,2,\dots,n}$. We will be working with diagonal matrices, which we denote by
\[
\diag(d_1,d_2,d_3,\dots, d_n) = \begin{bmatrix}
	d_1 & 0 & 0 & \dots & 0 \\
	0 & d_2 & 0 & \dots & 0 \\
	0 & 0 &  d_3 & \dots & 0 \\
	\vdots & \vdots & \vdots & \ddots & \vdots \\
	0 & 0 & 0 & \dots & d_n
\end{bmatrix} \, ,
\]
where for a non-square matrix there might be additional columns or rows of zeros to the right or below the square matrix containing the diagonal entries.

Let $A \in \ZZ^{n \times n}$ be nonzero. 
Let us denote $A[i,j]$ as the entry of $A$ in the $i$-th row and the $j$-th column.
It is known \cite{newmanbook} that there exists matrices $S \in \ZZ^{n\times n}$ and $T \in \ZZ^{n \times n}$ such that the product $SAT$ is the diagonal matrix
\[
SAT = \diag(\alpha_1,\alpha_2,\alpha_3,\dots, \alpha_r, 0, 0 ,\dots, 0) \, ,
\]
where each $\alpha_i$ is a non-negative integer such that $\alpha_i \mid \alpha_{i+1}$ for $1 \leq i < r$. 
This diagonal matrix is called the \emph{Smith normal form} of $A$, which we abbreviate as SNF. 
The entries $\alpha_i$ are unique and are called the \emph{elementary divisors} of $A$. 

The Smith normal form of an integral matrix can be obtained by a sequence of invertible elementary row and column operations, i.e., permuting rows or columns, adding an integer multiple of a row to another row, negating a row, etc.
These operations preserve the determinant and thus we have
\[
\det(A) = \prod_{i=1}^{n} \alpha_i \, .
\]

It is also known \cite{newmanbook} that the elementary divisors of a matrix can be computed as
\begin{equation}\label{eq:elementary_divisors_computation}
	\alpha_i = \frac{\delta_i(A)}{\delta_{i-1}(A)},
\end{equation}
where $\delta_0(A) = 1$ and $\delta_i(A)$ is the greatest common divisor of the determinants of all $i\times i$ submatrices of $A$.

\begin{example}
	Let 
	\[A = \begin{bmatrix}
		3 & 0 & 0 \\
		0 & 6 & 1 \\
		0 & 0 & 9
	\end{bmatrix}.\]
	We can see that $\delta_1(A) = 1$, i.e., the $\gcd$ of the entries is 1. 
	Thus we have $\alpha_1 = 1$. 
	For $\delta_2(A)$, one can show that each $2 \times 2$ minor of $A$ has determinant divisible by $3$ with the minor obtained by deleting the third row and second column having determinant exactly 3. 
	Thus, $\delta_2(A) = 3$, and so $\alpha_2 = \dfrac{\delta_2(A)}{\delta_1(A)} = 3$. Since $\delta_3(A) = \det A = 162$, we have $\alpha_3 = 54$. 
	Thus, the SNF of $A$ is $\diag(1,3,54)$.
\end{example}

In this work, our main object of study is the SNF of the following class of matrices, which have as input a positive integer vector of length $n$ and a finite simple acyclic directed graph, i.e., a DAG, on $n$ vertices. We will assume that all graphs are simple acyclic directed graphs.

\begin{defn}\label{defn:DGS}
	Given a DAG $G$ on $[n]$ and a vector $\vec{d} \in \ZZ_{\geq 1}^n$, we define the \emph{directed graphical simplex}  $A_{\vec{d}, G}$ to be the matrix 
	\[
	A_{\vec{d}, G} := \diag(d_1,d_2,\dots,d_n) + \sum_{(i,j) \in E(G)} E_{i,j} \in \ZZ_{\geq 0}^{n\times n},
	\]
	where $(i,j)$ is a directed edge, originating at vertex $i$ and connecting to vertex $j$, and $E_{i,j}$ is the $n \times n$ standard basis matrix such that the only nonzero entry is $E_{i,j}[i,j] = 1$.
\end{defn}

\begin{example}\label{ex:GHS_on_3}
	Let $\vec{d} = (2,5,3)$ and consider the following directed graphs on $[3]$:
	\[\begin{array}{ccc}
		G_1 = \begin{tikzpicture}
			\node[circle, draw] (A) at (0,0) {1};
			\node[circle, draw] (B) at (1,0) {2};
			\node[circle, draw] (C) at (2,0) {3};

		\end{tikzpicture} & \hspace{2cm} & A_{\vec{d}, G_1} = \begin{bmatrix}
			2 & 0 & 0\\
			0 & 5 & 0\\
			0 & 0 & 3
		\end{bmatrix}\\[1cm]
		
		G_2 = \begin{tikzpicture}
			\node[circle, draw] (A) at (0,0) {1};
			\node[circle, draw] (B) at (1,0) {2};
			\node[circle, draw] (C) at (2,0) {3};
			
			\draw[thick, ->] (B) -- (C);
			
		\end{tikzpicture} & \hspace{2cm} & A_{\vec{d}, G_2} = \begin{bmatrix}
			2 & 0 & 0\\
			0 & 5 & 1\\
			0 & 0 & 3
		\end{bmatrix}\\[1cm]
		
		G_3 = \begin{tikzpicture}
			\node[circle, draw] (A) at (0,0) {1};
			\node[circle, draw] (B) at (1,0) {2};
			\node[circle, draw] (C) at (2,0) {3};
			
			\draw[thick, ->] (A) -- (B);
			\draw[thick, ->] (B) -- (C);
			
		\end{tikzpicture} & \hspace{2cm} & A_{\vec{d}, G_3} = \begin{bmatrix}
			2 & 1 & 0\\
			0 & 5 & 1\\
			0 & 0 & 3
		\end{bmatrix}\\[1cm]
		
		G_4 = \begin{tikzpicture}
			\node[circle, draw] (A) at (0,0) {2};
			\node[circle, draw] (B) at (1,0) {1};
			\node[circle, draw] (C) at (2,0) {3};
			
			\draw[thick, ->] (A) -- (B);
			\draw[thick, ->] (B) -- (C);
			
		\end{tikzpicture} & \hspace{2cm} & A_{\vec{d}, G_4} = \begin{bmatrix}
			2 & 0 & 1\\
			1 & 5 & 0\\
			0 & 0 & 3
		\end{bmatrix}\\[1cm]
		
		G_5 = \begin{tikzpicture}
			\node[circle, draw] (A) at (1,0) {1};
			\node[circle, draw] (B) at (0,0) {2};
			\node[circle, draw] (C) at (2,0) {3};
			
			\draw[thick, ->] (A) -- (B);
			\draw[thick, ->] (A) -- (C);
			
		\end{tikzpicture} & \hspace{2cm} & A_{\vec{d}, G_5} = \begin{bmatrix}
			2 & 1 & 1\\
			0 & 5 & 0\\
			0 & 0 & 3
		\end{bmatrix}\\[1cm]

	\end{array}\]
\end{example}

Note that in Example \ref{ex:GHS_on_3} above, $A_{\vec{d}, G_4}$ is the only matrix that is not upper triangular. This is because the edge between 1 and 2 is directed towards the smaller labeled vertex. In fact, $A_{\vec{d}, G}$ is upper triangular if and only if each edge in $G$ is directed towards the vertex with the larger label. We say that $G$ is an \emph{increasing DAG} if each edge is oriented in this way. Increasing DAGs are closely related to the topological ordering of a DAG, and such a construction is guaranteed to be acyclic. 

\begin{defn}\label{defn:GHS}
	Let $G$ be an increasing DAG on $[n]$.
	We call $A_{\vec{d}, G}$ a \emph{graphical Hermite simplex}.
\end{defn}

We will briefly explain the motivation for the name graphical Hermite simplex.
If $G$ is an increasing DAG, then $A_{\vec{d}, G}$ is automatically in \emph{Hermite normal form}, where $B \in \ZZ^{n\times n}$ is in Hermite normal form if
\begin{enumerate}
	\item $B[i,j] \in\ZZ_{\geq 0}$ for all $i$ and $j$,
	\item $B$ is upper triangular, and
	\item each column of $B$ has a unique maximum entry located on the main diagonal of $B$.
\end{enumerate} 

We obtain a lattice simplex as the convex hull of the zero vector and the columns of $A_{\vec{d}, G}$, and it is the algebraic and geometric properties of these simplices that we are interested in.
It is known that every lattice simplex is uniquely represented as the convex hull of the zero vector and a square matrix in Hermite normal form~\cite[Theorem 4.1~and~Corollary 4.2a]{schrijver}.
Many important invariants of these simplices, such as the Ehrhart $h^*$-polynomial and the integer decomposition property, are controlled by the integer points of the fundamental parallelepiped of the simplex; these integer points have a finite abelian group structure determined by the SNF of $A_{\vec{d} ,G}$.

\begin{defn}\label{defn:FP}
	The \emph{fundamental parallelepiped} of a directed graphical simplex $A_{\vec{d}, G}$ is the set
	\[\mathcal P(A_{\vec{d}, G}) = \set{\left[\begin{array}{c|c}
			1 & \vec{1} \hspace{0.3mm}\phantom{}^T\\
			\hline\\[-1em]
			\vec{0} & A_{\vec{d}, G}
		\end{array}\right]\vec{x}  : \vec{x} \in [0,1)^{n+1}} = \set{\sum_{i=0}^n \lambda_i \begin{bmatrix}
			1 \\ \vec{v_i}
		\end{bmatrix}  : \lambda_i \in [0,1) },\]
	where $\vec{v_0}$ is the zero vector and $\vec{v_i}$ is the $i$-th column of $A_{\vec{d}, G}$.
\end{defn} 
The following proposition is well-known~\cite{stanleysnfcomb}.

\begin{prop}\label{thm:SNF_group}
	Using the notation as in Definition \ref{defn:FP}, the integer points of $\mathcal{P}(A_{\vec{d}, G})$ form a group under addition given by the cokernel of $\mathcal P(A_{\vec{d}, G})$:
	\[\coker(A_{\vec{d}, G}) = \ZZ^{n+1} / \Span_\ZZ\left(\vec{e_1}, \begin{bmatrix}
		1 \\ \vec{v_1}
	\end{bmatrix}, \dots, \begin{bmatrix}
		1 \\ \vec{v_n}
	\end{bmatrix}\right) \cong \ZZ/\alpha_1\ZZ \oplus \ZZ/\alpha_2\ZZ \oplus \dots \oplus \ZZ/\alpha_n \ZZ,\]
	where $\alpha_i$ are the elementary divisors of $A_{\vec{d}, G}$.
\end{prop}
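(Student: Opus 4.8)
The plan is to reduce both assertions---that the integer points of $\mathcal P(A_{\vec d, G})$ carry a group structure, and that this group is the stated direct sum---to two standard facts, one about fundamental domains of lattices and one about Smith normal form under column operations. Throughout, write
\[
M = \left[\begin{array}{c|c} 1 & \vec 1^{\,T} \\ \hline \vec 0 & A_{\vec d, G} \end{array}\right] \in \ZZ^{(n+1)\times(n+1)},
\]
so that $\mathcal P(A_{\vec d, G}) = \set{M\vec x : \vec x \in [0,1)^{n+1}}$, and let $L = M\ZZ^{n+1}$ be the lattice spanned by the columns of $M$; in the paper's notation $\coker(A_{\vec d, G}) = \ZZ^{n+1}/L$. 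Since $A_{\vec d, G}$ is upper triangular with positive diagonal, $\det M = \det A_{\vec d, G} \neq 0$, so $M$ is invertible over $\QQ$ and $L$ has full rank $n+1$.

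First I would show that reduction modulo $L$ gives a bijection between the integer points $\mathcal P(A_{\vec d, G}) \cap \ZZ^{n+1}$ and $\coker(A_{\vec d, G}) = \ZZ^{n+1}/L$, and then transport the additive structure across it. Given $\vec p \in \ZZ^{n+1}$, write $\vec p = M\vec x$ for the unique $\vec x = M\inv \vec p \in \QQ^{n+1}$, and let $\vec y \in [0,1)^{n+1}$ be its componentwise fractional part, so that $\vec x - \vec y \in \ZZ^{n+1}$. Then $M\vec y = \vec p - M(\vec x - \vec y)$ is an integer vector lying in $\mathcal P(A_{\vec d, G})$ and congruent to $\vec p$ modulo $L$, which gives surjectivity onto the cokernel; uniqueness follows because if $M\vec y_1 - M\vec y_2 \in L$ then $\vec y_1 - \vec y_2 \in \ZZ^{n+1}$ by invertibility of $M$, and two points of $[0,1)^{n+1}$ differing by an integer vector coincide. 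Transporting addition on $\coker(A_{\vec d, G})$ through this bijection is precisely the group structure asserted in the statement. This step is routine; the only care needed is confirming that the fractional-part representative $M\vec y$ is itself integral, which is exactly the computation above.

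The remaining step, carrying the genuine content, is to identify the elementary divisors of $M$ with those of $A_{\vec d, G}$. Here I would use that every entry in the top row of $M$ beyond the first equals $1$ and that the first column is $\vec e_1$: subtracting the first column from each of columns $2,\dots,n+1$ clears the top row and produces the block-diagonal matrix $\left[\begin{smallmatrix} 1 & \vec 0^{\,T} \\ \vec 0 & A_{\vec d, G}\end{smallmatrix}\right]$. Because elementary column operations over $\ZZ$ realize a change of basis of $L$, they preserve the isomorphism type of the cokernel and hence the Smith normal form; the Smith normal form of this block matrix is $\diag(1,\alpha_1,\dots,\alpha_n)$, where the divisibility chain $1 \mid \alpha_1 \mid \cdots \mid \alpha_n$ holds automatically since $\alpha_1 \geq 1$. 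Reading off the cokernel of a diagonal matrix then gives $\coker(A_{\vec d, G}) \iso \ZZ/1\ZZ \oplus \bigoplus_{i=1}^n \ZZ/\alpha_i\ZZ$, and discarding the trivial summand $\ZZ/1\ZZ$ yields the stated decomposition.

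I expect no serious obstacle: the argument combines the coset-representative description of a fundamental domain with a single column reduction. The one point that must be handled cleanly is the bookkeeping around the augmented matrix---verifying that the extra row and column contribute only the trivial invariant factor $1$ and do not disturb the divisibility ordering of the $\alpha_i$---rather than any substantive difficulty.
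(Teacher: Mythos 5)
Your proof is correct. The paper itself offers no argument for this proposition: it is stated as well-known with a citation to the literature on Smith normal forms in combinatorics, so there is no internal proof to compare against. Your two steps are precisely the standard argument underlying that citation: first, the fractional-part construction giving a bijection between the integer points of the half-open parallelepiped and $\ZZ^{n+1}/L$ where $L$ is the column lattice of the augmented matrix $M$ (with the group law on the integer points obtained by transporting addition through this bijection); second, the observation that subtracting the first column $\vec e_1$ from the remaining columns is a unimodular column operation carrying $M$ to the block matrix $\left[\begin{smallmatrix} 1 & \vec 0^{\,T} \\ \vec 0 & A_{\vec d,G}\end{smallmatrix}\right]$, whose SNF is $\diag(1,\alpha_1,\dots,\alpha_n)$, so the trivial summand can be discarded. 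The only point worth stating explicitly is that every $\alpha_i$ is a positive integer because $A_{\vec d,G}$ is upper triangular with positive diagonal entries; this is what makes $M$ invertible over $\QQ$ (used in your bijection step) and ensures the chain $1 \mid \alpha_1 \mid \cdots \mid \alpha_n$ consists of nonzero terms, so the displayed diagonal matrix really is a Smith normal form.
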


Despite our initial motivation of studying generalizations of simplices presented by matrices in Hermite normal form, our results will focus on directed graphical simplices, i.e., we will not require $G$ to be an increasing DAG. However, every directed graphical simplex has the same SNF as some graphical Hermite simplex. In particular, the SNF of a directed graphical simplex is preserved by a relabeling of the vertices of $G$ with a corresponding reindexing of the diagonal vector. 
We denote by $S_n$ the set of permutations on $n$ elements.

\begin{prop}\label{thm:relabeling}
	Let $G$ be a DAG on $[n]$, let $\vec{d} \in \ZZ^n$, and let $T \in S_n$. Then let $T(G)$ denote the directed graph obtained by replacing the label of each vertex $v$ with $T(v)$ and let
	\[
	T(\vec{d}) = (d_{T(1)}, d_{T(2)}, \dots ,d_{T(n)})\, .
	\] 
	Then $A_{\vec{d}, G}$ and $A_{T(\vec{d}), T(G)}$ have the same SNF. 
\end{prop}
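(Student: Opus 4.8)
The plan is to realize the reindexing by $T$ as conjugation of $A_{\vec d, G}$ by a permutation matrix, and then to invoke the invariance of the Smith normal form under multiplication by unimodular matrices. Concretely, I would let $P = P_T \in \ZZ^{n \times n}$ be the permutation matrix associated to $T$, defined by $P \vec e_i = \vec e_{T(i)}$ (equivalently, $P[k,i] = 1$ exactly when $k = T(i)$). Permutation matrices are unimodular: $P$ has integer entries, $\det P = \pm 1$, and $P^{-1} = P^T$ is again an integer matrix. Since the Smith normal form is unchanged under left or right multiplication by unimodular matrices — this is exactly the statement that SNF is computed by invertible integer row and column operations, as recalled after the definition of SNF, and it is also visible from the characterization of the elementary divisors via the gcd's $\delta_i$ in~\eqref{eq:elementary_divisors_computation}, which are invariant under such operations — it suffices to exhibit $A_{T(\vec d), T(G)}$ as a conjugate of $A_{\vec d, G}$ by $P$ (up to the appropriate choice of $P$ versus $P^{-1}$).

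The key step is therefore the entrywise identity
\[
A_{T(\vec d), T(G)} = P^{-1} A_{\vec d, G}\, P \, ,
\]
which says that passing from $A_{\vec d, G}$ to $A_{T(\vec d), T(G)}$ is precisely a simultaneous permutation of the rows and columns by $T$. I would verify this by splitting the matrix into its diagonal part and its edge part. For the diagonal part, the $(k,k)$ entry of the conjugate is the $(T(k),T(k))$ entry of $A_{\vec d, G}$, namely $d_{T(k)}$, which matches the $k$-th entry of $T(\vec d)$ by definition. For the edge part, the $(k,\ell)$ entry of the conjugate with $k \neq \ell$ is the $(T(k), T(\ell))$ entry of $A_{\vec d, G}$, which equals $1$ exactly when the corresponding directed edge lies in $G$; unwinding the definition of $T(G)$ as the relabeling of each vertex $v$ by $T(v)$ then shows that this is exactly the condition for the off-diagonal $1$ to appear in the corresponding entry of $A_{T(\vec d), T(G)}$, once the convention aligning $T(G)$ and $T(\vec d)$ is fixed as below.

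The one place that requires care — and what I expect to be the only real obstacle — is bookkeeping the direction of the permutation so that the diagonal reindexing $T(\vec d)$ and the edge relabeling $T(G)$ are governed by the \emph{same} permutation, i.e., so that the operation really is a conjugation $Q A_{\vec d, G} Q^{-1}$ by a single permutation matrix rather than an unrelated pair of row and column permutations. Once the conventions for $P$, $T(\vec d)$, and $T(G)$ are pinned down consistently and the identity above is checked on both the diagonal and the off-diagonal entries, the conclusion is immediate: $A_{T(\vec d), T(G)}$ and $A_{\vec d, G}$ are unimodularly equivalent, hence they share the same Smith normal form, which by Proposition~\ref{thm:SNF_group} is equivalent to saying that their cokernels are isomorphic.
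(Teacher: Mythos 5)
Your strategy---realize the relabeling as conjugation by a permutation matrix and invoke invariance of SNF under unimodular row and column operations---is in essence the paper's own argument: the paper writes $T$ as a product of transpositions and performs the corresponding simultaneous row and column swaps, which is exactly conjugation by a permutation matrix, one transposition at a time. However, the step you explicitly deferred (``once the conventions for $P$, $T(\vec d)$, and $T(G)$ are pinned down consistently'') is not a bookkeeping formality: under the conventions stated in the Proposition it cannot be carried out. With $P\vec e_i = \vec e_{T(i)}$ you correctly get $(P^{-1} A_{\vec d, G} P)[k,\ell] = A_{\vec d, G}[T(k), T(\ell)]$, so the diagonal of the conjugate is $d_{T(k)}$, matching $T(\vec d)$; but the off-diagonal support of the conjugate is $\set{(k,\ell) : (T(k),T(\ell)) \in E(G)}$, which is the edge set of the relabeling of $G$ by $T^{-1}$, not by $T$. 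The diagonal transforms by $T$ while the edges transform by $T^{-1}$, and no choice of $P$ versus $P^{-1}$ repairs this, because the two pieces of the matrix move in opposite directions; they agree only when $T$ is an involution.

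In fact the Proposition as literally stated is false for non-involutive $T$, so the verification you postponed could never be completed. Take $n = 3$, $E(G) = \set{(1,2)}$, $\vec d = (2,2,3)$, and $T$ the $3$-cycle with $T(1)=2$, $T(2)=3$, $T(3)=1$. Then $E(T(G)) = \set{(2,3)}$ and $T(\vec d) = (d_2,d_3,d_1) = (2,3,2)$, so
\[
A_{\vec d, G} = \begin{bmatrix} 2 & 1 & 0 \\ 0 & 2 & 0 \\ 0 & 0 & 3 \end{bmatrix}, \qquad
A_{T(\vec d), T(G)} = \begin{bmatrix} 2 & 0 & 0 \\ 0 & 3 & 1 \\ 0 & 0 & 2 \end{bmatrix},
\]
whose Smith normal forms are $\diag(1,1,12)$ and $\diag(1,2,6)$ respectively. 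The fix is to make the two actions mutually inverse, e.g.\ define $T(\vec d)_k = d_{T^{-1}(k)}$ while relabeling vertices by $T$; then your identity becomes $A_{T(\vec d),T(G)} = P A_{\vec d, G} P^{-1}$, it holds entrywise, and your proof closes. Note that the paper's own proof harbors the same defect: the reduction ``it suffices to check transpositions'' requires the actions on $\vec d$ and on $G$ to compose the same way, but with the stated conventions $\vec d \mapsto T(\vec d)$ is a right action while $G \mapsto T(G)$ is a left action (the counterexample above is a product of two transpositions). So your proof is incomplete at exactly the point you flagged, but the obstruction lies in the statement's conventions rather than in your method; once those are corrected, your one-shot conjugation argument is, if anything, cleaner than the transposition-by-transposition route, since it makes the needed compatibility visible instead of hiding it.
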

\begin{proof}
	Since every permutation can be written as a product of transpositions, it suffices to show that for any DAG $G$ on $[n]$ and any single transposition $T = (v\ w) \in S_n$, the matrices $A_{\vec{d}, G}$ and $A_{T(\vec{d}), T(G)}$ have the same SNF. 
	This can be done by showing that $A_{T(\vec{d}), T(G)}$ can be obtained from $A_{\vec{d}, G}$ using elementary row and column operations.
	Let $A'$ be the matrix obtained by swapping rows $v$ and $w$ of $A_{\vec{d}, G}$ followed by swapping columns $v$ and $w$. 
	It is a straightforward exercise to verify that $A' = A_{T(\vec{d}), T(G)}$, and thus these matrices have the same SNF.
\end{proof}

Proposition \ref{thm:relabeling} allows us to assume without loss of generality that $G$ is an increasing DAG, and then show any result also extends to any DAG isomorphic to $G$ (with an appropriate reordering of $\vec{d}$).

Another helpful tool for studying directed graphical simplices is to use indicator variables for the non-diagonal entries as follows. We define indicator variables $x_{i,j} \in \set{0,1}$ for $i\neq j$, where $x_{i,j}$ evaluates to 1 if and only if $(i,j) \in E(G)$. 
Thus, we can write
\[
A_{\vec{d}, G}[i,j] = \begin{cases}
	d_i & \text{if } i = j,\\
	x_{i,j} & \text{otherwise}.
\end{cases}\]
We use monomials formed from the indicator variables $x_{i,j}$ to identify paths in $G$. 

\begin{defn}
	Given a DAG $G$, a \emph{path} $P \in G$ is a sequence of vertices $(v_1 \to v_2 \to v_3 \to \dots \to v_k)$ such that $(v_i, v_{i+1}) \in E(G)$ for all $1 \leq i \leq k-1$. We denote $\ell(P)$ to be the \emph{length} of $P$, which is the number of edges traversed in $P$.
\end{defn}

\begin{example}
	Let $G$ be a DAG on $[5]$. Then we have the following equivalences:
	\begin{align*}
		x_{1,2}x_{2,3}x_{3,4} = 1 \qquad &\iff \qquad (1\to 2\to 3\to 4) \in G\\
		x_{1,3}x_{3,5} =1 \qquad &\iff \qquad (1 \to 3 \to 5) \in G\\
		x_{2,5} = 1\qquad &\iff \qquad (2 \to 5) \in G
	\end{align*}
\end{example} 

Although we can define these indicator variables for any DAG $G$, we would like to restrict $G$ to be an increasing DAG. If $G$ is an increasing DAG, we can write $A_{\vec{d}, G}$ as
\[A_{\vec{d}, G} = \diag(d_1,d_2,\dots, d_n) + \sum_{1 \leq i < j \leq n} x_{i,j}E_{i,j} = \begin{bmatrix} 
	d_1 & x_{1,2} & x_{1,3} & \dots & x_{1,n}\\
	0 & d_2 & x_{2,3} & \dots & x_{2,n}\\
	0 & 0 & d_3 & \dots & x_{3,n}\\
	\vdots & \vdots & \vdots & \ddots & \vdots\\
	0 & 0 & 0 & \dots & d_n	
\end{bmatrix}.\]
Restricting $G$ to be an increasing DAG forces $A_{\vec{d}, G}$ to be upper triangular, and this will simplify some of the future proofs. 
Using these indicator variables, we next prove a result regarding the minors of $A_{\vec{d}, G}$. 
We first define the notion of gaps of a path.
\begin{defn}
	Given an increasing path $P = (v_1\to v_2 \to \dots \to v_k)$ in an increasing DAG $G$ on $[n]$, we say that $u \in [n]$ is a \emph{gap} of $P$ if $v_1 < u < v_k$ and $u\notin P$.
	For $\vec{d} = (d_1,d_2,\dots, d_n) \in \ZZ^n$, we define the function $g(P)$ as 
	\[
	g(P) := \prod_{i \in S} d_i \, ,
	\]
	where $S \subseteq [n]$ is the set of gaps of $P$. 
	We set $g(P) := 1$ when $P$ contains no gaps.
\end{defn}

\begin{lemma}\label{thm:minor_det}
	Let $G$ be an increasing DAG on $[n]$ and let $x_{i,j}$ be the indicator variable for the edge $(i,j) \in G$. Also, let $\vec{d} = (d_1, d_2, \dots, d_n) \in \ZZ_{\geq 0}^n$.
	If $M_{a,b}$ is the $b \times b$ submatrix of $A_{\vec{d}, G}$ given by
	\[
	M_{a,b} = \begin{bmatrix}
		x_{a,a+1} & x_{a,a+2} & \dots & x_{a,a+b-1} & x_{a,a+b} \\
		d_{a+1} & x_{a+1, a+2} & \dots & x_{a+1,a+b-1} & x_{a+1,a+b} \\
		0 & d_{a+2} & \dots & x_{a+2,a+b-1} & x_{a+2,a+b} \\
		\vdots & \vdots & \ddots & \vdots & \vdots \\
		0 & 0 & \dots & d_{a+b-1} & x_{a+b-1,a+b} \\
	\end{bmatrix},
	\]
	then the determinant of $M_{a,b}$ is equal to
	\[
	\sum_{P = (a \to \dots \to a+b) \in G}   (-1)^{b-\ell(P)} g(P)  \, ,
	\]
	where $\ell(P)$ is the length of $P$ and the sum is equal to $0$ if no such paths exist.
\end{lemma}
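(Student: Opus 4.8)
The plan is to induct on $b$ via cofactor expansion of $\det(M_{a,b})$ along its first row. The key preliminary observation is that $M_{a,b}$ is upper Hessenberg: writing its $(p,q)$ entry as $A_{\vec d,G}[a+p-1,\,a+q]$, the subdiagonal entries $M_{a,b}[p,p-1]$ equal $d_{a+p-1}$, every entry on or above the diagonal (those with $q\ge p$) is an indicator $x_{a+p-1,\,a+q}$, and every entry strictly below the subdiagonal ($q<p-1$) vanishes. In particular the whole first row consists of the indicators $x_{a,a+1},\dots,x_{a,a+b}$, so cofactor expansion there reads $\det(M_{a,b}) = \sum_{q=1}^{b} (-1)^{1+q}\,x_{a,a+q}\,\det(N_q)$, where $N_q$ is the minor obtained by deleting row $1$ and column $q$.

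The crux of the argument is to evaluate each $N_q$. I would partition its rows into $\{a+1,\dots,a+q-1\}$ and $\{a+q,\dots,a+b-1\}$ and its columns into $\{a+1,\dots,a+q-1\}$ and $\{a+q+1,\dots,a+b\}$. Because $A_{\vec d,G}[r,c]=0$ whenever $r>c$, the lower-left block of $N_q$ (rows indexed $\ge a+q$, columns indexed $\le a+q-1$) is identically zero, so $N_q$ is block upper triangular. Its top-left block is the upper-triangular matrix on rows and columns $\{a+1,\dots,a+q-1\}$ with diagonal $d_{a+1},\dots,d_{a+q-1}$, and its bottom-right block is exactly $M_{a+q,\,b-q}$. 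Hence $\det(N_q) = \big(\prod_{i=1}^{q-1} d_{a+i}\big)\det(M_{a+q,\,b-q})$, yielding the recursion
\[
\det(M_{a,b}) = \sum_{q=1}^{b} (-1)^{1+q}\,x_{a,a+q}\Big(\prod_{i=1}^{q-1} d_{a+i}\Big)\det(M_{a+q,\,b-q}).
\]

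With this recursion the induction runs cleanly. Each increasing path $P=(a\to\dots\to a+b)$ factors uniquely through its second vertex $a+q$ as the edge $a\to a+q$ followed by a path $P'=(a+q\to\dots\to a+b)$, and this is a bijection as $q$ ranges over $1,\dots,b$ and $P'$ over paths from $a+q$ to $a+b$ (to which the induction hypothesis applies via $\det(M_{a+q,b-q})$). I would then verify three bookkeeping items. First, the monomial: the factor $x_{a,a+q}$ together with the edge-indicators of $P'$ is precisely $\prod_{(u,v)\in P} x_{u,v}$, so after evaluating at $G$ a term survives iff $P$ lies in $G$. Second, the gaps: the skipped vertices $\{a+1,\dots,a+q-1\}$ are exactly the new gaps of $P$ relative to $P'$, so $\big(\prod_{i=1}^{q-1}d_{a+i}\big)g(P')=g(P)$. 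Third, the sign: since $\ell(P)=\ell(P')+1$, the prefactor sign combines as $(-1)^{1+q}(-1)^{(b-q)-\ell(P')+1}=(-1)^{b-\ell(P')}=(-1)^{b-\ell(P)+1}$, matching the claimed exponent.

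The base case $b=1$ is immediate, since $M_{a,1}=[\,x_{a,a+1}\,]$ and the only path $a\to a+1$ contributes $(-1)^{1-2+1}g(P)=x_{a,a+1}$; at the boundary $q=b$ the factor $\det(M_{a+b,0})$ is the empty determinant $1$, corresponding to the trivial length-one path at $a+b$, so the path-splitting remains valid. I expect the only genuinely delicate step to be the block-triangular factorization of the minors $N_q$; once that identity is in hand, the gap and sign accounting is routine and the path-splitting bijection organizes the inductive step without further surprises. Conceptually the same statement also follows from the Leibniz formula by matching the Hessenberg-admissible permutations (those with $\sigma(p)\ge p-1$) to paths through their sets of subdiagonal positions, but the row-expansion recursion makes the sign and gap bookkeeping more transparent.
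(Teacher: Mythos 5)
Your proposal is correct and follows essentially the same route as the paper's proof: induction on $b$ via cofactor expansion along the first row, the block-triangular evaluation of each minor as $\bigl(\prod_{i} d_{a+i}\bigr)\det(M_{a+q,b-q})$, and the bijection extending paths from $a+q$ to paths from $a$ with the same sign and gap bookkeeping. The only cosmetic difference is that you treat the $q=b$ column uniformly via the empty determinant convention, whereas the paper handles that term as a separate upper-triangular case.
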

\begin{proof}
	We will proceed via induction on $b$. Let $b = 1$. Recall that $\det M_{a,1} = x_{a,a+1}$ evaluates to 1 if and only if the path $(a \to a+1) \in G$. 
	As this path has no gaps and has length $1$, we have the desired outcome of
	\[\det M_{a,1} = \sum_{P = (a \to a+1) \in G} (-1)^{1-\ell(P)} g(P) = \begin{cases} 1 & (a \to a+1) \in G\\ 0 & \text{otherwise} \end{cases}.\]
	
	Assume the inductive hypothesis for $k \leq b-1$.
	We perform cofactor expansion on $M_{a,b}$ along the first row. 
	Note that deleting column $a+i$ for $i < b$ exposes a row containing only indicator variables of the form $x_{a+i,j}$. 
	In addition, in the cofactor expansion, the columns to the left of the deleted column will have entries of $d_j$ along the main diagonal, whereas the columns to the right of the deleted column will form the submatrix $M_{a+i, b-i}$. 
	Thus in the cofactor expansion, the submatrix obtained by deleting the first row and $i$-th column, for $i < k$, has determinant 
	\[ 
	\det M_{a+i, b-i} \prod_{j = a+1}^{a+i-1} d_j  \, .
	\]
	When $i = b$, then we have that the corresponding submatrix obtained is an upper triangular matrix with entries of $d_j$ along the diagonal, hence that submatrix's determinant is $\prod_{j = a+1}^{a+b-1} d_j$. 
	So, from cofactor expansion and our inductive hypothesis, we have 
	\begin{align*}
		\det M_{a,b} &=  (-1)^{k-1} x_{a,a+b} \prod_{j = a+1}^{a+b-1} d_j + \sum_{i=1}^{b-1} (-1)^{i-1}  x_{a,a+i} \det M_{a+i, b-i} \prod_{j = a+1}^{a+i-1} d_j\\
		&= (-1)^{b-1} x_{a,a+b} \prod_{j = a+1}^{a+b-1} d_j + \sum_{i=1}^{b-1} (-1)^{i-1}  x_{a,a+i} \left(\sum_{P = (a+i \to \dots \to a+b) \in G} (-1)^{b-i-\ell(P)} g(P)\right) \prod_{j = a+1}^{a+i-1} d_j.
	\end{align*}
	
	Note that for $i < b$, all paths $P = (a+i \to \dots \to a+b)$ can be extended to a path from $a$ to $a+b$ by adding the edge $(a, a+i)$. 
	Note that this resulting path keeps all of its previous gaps in addition to $\set{a+1,a+2,\dots,a+i-1}$. 
	Also, the edge $(a,a+b)$ can be interpreted as a path with gaps $\set{a+1,a+2,\dots,a+b-1}$. 
	Therefore, if we instead describe $P$ as the path $(a \to a+i \to \dots \to a+b)$, then we can write
	\begin{align*}
		\det M_{a,b} &=  \sum_{P = (a \to a+b) \in G }(-1)^{b-\ell(P)} g(P) + \sum_{i=1}^{b-1} \sum_{P = (a \to a+i \to \dots \to a+b) \in G} (-1)^{b-\ell(P)} g(P)\\
		&= \sum_{P = (a \to \dots \to  a+b) \in G }(-1)^{b-\ell(P)} g(P).
	\end{align*}
\end{proof}

The submatrix $M_{a,b}$ will be used several times throughout the paper, as paths in our graph $G$ will tell us some information regarding the determinant of $M_{a,b}$. In particular,  the determinant of $M_{a,b}$ is nonzero only if there exists a path from $a$ to $a+b$, and if we add additional conditions on the paths of $G$, then we can often make conclusions regarding the elementary divisors of $A_{\vec{d}, G}$. 

\section{Cyclic Cokernel}
\label{sec:cyclic_cokernel}

In this section, we establish conditions on $\vec{d}$ and $G$ that produce a directed graphical simplex with a cyclic cokernel.  

\begin{thm}\label{thm:cyclic_cokernel}
	Let $G$ be a DAG on $[n]$ and $\vec{d} \in \ZZ^n_{\geq 1}$. If one of the following hold:
	\begin{enumerate}
		\item The entries of $\vec{d}$ are pairwise coprime and $G$ is arbitrary,
		\item $G$ is a path of length $n-1$ and $\vec{d}$ is arbitrary,
		\item $G$ contains a path of length $n-1$ and $\vec{d} = (m,m,\dots,m) \in \ZZ_{\geq 1}^n$ ,
	\end{enumerate}
	then $A_{\vec{d}, G}$ has a cyclic cokernel.
\end{thm}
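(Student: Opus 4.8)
The plan is to reduce all three cases to a single statement about minors. By Proposition~\ref{thm:SNF_group}, $\coker(A_{\vec d, G})$ is cyclic if and only if the group $\bigoplus_{i=1}^n \ZZ/\alpha_i\ZZ$ is cyclic, which (since $\alpha_i \mid \alpha_{i+1}$) happens exactly when $\alpha_1 = \cdots = \alpha_{n-1} = 1$. By \eqref{eq:elementary_divisors_computation} the telescoping product gives $\prod_{i=1}^{n-1}\alpha_i = \delta_{n-1}(A_{\vec d, G})$, so it suffices to prove in each case that the gcd of all $(n-1)\times(n-1)$ minors of $A_{\vec d, G}$ equals $1$. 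In every case I would first invoke Proposition~\ref{thm:relabeling} to relabel the vertices so that $G$ carries its topological ordering; this preserves the SNF, keeps $\vec d$ pairwise coprime in case~(1), and leaves the constant vector unchanged in case~(3). Then $A_{\vec d, G}$ is upper triangular with diagonal $\vec d$ and above-diagonal entries equal to the indicator variables $x_{i,j}$.

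For case~(1), I would use the $n$ principal minors obtained by deleting row $i$ and column $i$. Each such submatrix is again upper triangular with diagonal $(d_j)_{j\ne i}$, so its determinant is $\prod_{j\ne i} d_j$. Given a prime $p$, pairwise coprimality means $p$ divides at most one $d_i$; the principal minor deleting that index is then not divisible by $p$. Hence no prime divides all these minors, so their gcd, and therefore $\delta_{n-1}$, equals $1$.

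For case~(2), after relabeling $G$ is the path $1\to 2\to\cdots\to n$, so $A_{\vec d, G}$ is bidiagonal with $1$'s on the superdiagonal. I would compute the single minor $M_{1,n-1}$ obtained by deleting the last row and the first column, i.e.\ the submatrix on rows $\set{1,\dots,n-1}$ and columns $\set{2,\dots,n}$. Its $(s,s)$ entry is the superdiagonal entry $x_{s,s+1}=1$, its $(s,s-1)$ entry is $d_s$, and all entries strictly above its diagonal vanish because $A_{\vec d, G}$ is bidiagonal; thus this minor is lower triangular with all $1$'s on the diagonal and has determinant $1$. A single unimodular minor forces $\delta_{n-1}=1$.

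Case~(3) is the step I expect to be the main obstacle, since the same minor $M_{1,n-1}$ no longer has determinant exactly $1$: the extra edges of $G$ contribute nonzero entries above its diagonal. Here I would appeal to Lemma~\ref{thm:minor_det}. With $\vec d=(m,\dots,m)$ one has $g(P)=m^{(\text{number of gaps of }P)}$, and under the topological ordering the hypothesized path of length $n$ must be $1\to 2\to\cdots\to n$, which has no gaps and length $\ell=n$, contributing $(-1)^{(n-1)-n+1}g(P)=+1$ to $\det M_{1,n-1}$, while every other path from $1$ to $n$ has at least one gap and contributes a multiple of $m$. Hence $\det M_{1,n-1}\equiv 1 \pmod{m}$, so this minor is coprime to $m$. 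The remaining point is that $\delta_{n-1}$ can only involve primes dividing $m$: since $\delta_{n-1}=\alpha_1\cdots\alpha_{n-1}$ divides $\det A_{\vec d, G}=m^n$, every prime factor of $\delta_{n-1}$ divides $m$. But $\delta_{n-1}$ divides $\det M_{1,n-1}$, which is coprime to $m$, and an integer that is simultaneously built only from primes dividing $m$ and coprime to $m$ must equal $1$. Therefore $\delta_{n-1}=1$ and the cokernel is cyclic. The degenerate case $m=1$ is immediate, as then $\det A_{\vec d, G}=1$ and the cokernel is trivial.
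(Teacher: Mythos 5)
Your proof is correct, and for cases (1) and (2) it is essentially the paper's own argument: the paper's Proposition~\ref{thm:pairwise_coprime} uses exactly your principal-minor computation, and its Proposition~\ref{thm:path_SNF} relabels via Proposition~\ref{thm:relabeling} and extracts the same unimodular minor $M_{1,n-1}$ (quoting Lemma~\ref{thm:minor_det} where you compute the bidiagonal determinant by hand). Where you genuinely diverge is case (3). The paper handles it (Corollary~\ref{thm:full_spine}) by forward reference to the Section~4 machinery: Lemma~\ref{thm:n-1_minors} organizes \emph{all} $(n-1)\times(n-1)$ minors $A_{c,r}$ as polynomials in $m$, Theorem~\ref{thm:largest_bound} deduces the lower bound $\delta_{n-1}\geq m^{n-h}$, and Theorem~\ref{thm:unique_largest_eq} pins down $\delta_{n-1}=m^{n-h}$ exactly when the number of maximal paths is coprime to $m$; case (3) is then the specialization $h=n$, $f_{1,n}(n)=1$. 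Your route is more self-contained: you only need Lemma~\ref{thm:minor_det} to see that $\det M_{1,n-1}\equiv 1\pmod m$ (the full path contributes $+1$, every other path contributes a multiple of $m$), and then the observation that $\delta_{n-1}$ divides $\delta_n=m^n$, so an integer all of whose prime factors divide $m$ yet which divides a quantity coprime to $m$ must be $1$. This is the same number-theoretic core that sits inside the paper's proof of Theorem~\ref{thm:unique_largest_eq} (there the role of $m^n$ is played by the principal minor $\det A_{1,1}=m^{n-1}$), but you avoid entirely the lower-bound half of that theorem and the general bookkeeping of path-counting functions $f_{c,r}$. What the paper's detour buys is the stronger quantitative statement (the largest invariant factor equals $m^h$ for general $h$ under a coprimality hypothesis), of which case (3) is a corollary; what your argument buys is a shorter, purely local proof of the theorem as stated, usable before Section~4 is developed.
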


The rest of this section contains propositions yielding the implications stated in Theorem \ref{thm:cyclic_cokernel}. 
Note that by the divisibility of the elementary divisors of $A_{\vec{d}, G}$, it suffices to show that $\delta_{n-1}(A_{\vec{d}, G}) = 1$ to show that $A_{\vec{d}, G}$ has a cyclic cokernel.

\begin{prop}\label{thm:pairwise_coprime}
	Suppose that the entries of $\vec{d} \in \ZZ_{\geq 1}^n$ are pairwise coprime. 
	Then for any DAG $G$, the matrix $A_{\vec{d}, G}$ has only one nonunit elementary divisor.
\end{prop}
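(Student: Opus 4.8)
The goal is to show $\delta_{n-1}(A_{\vec d, G}) = 1$, which by the reasoning preceding the proposition forces a cyclic cokernel. The plan is to exhibit, for each $i \in [n]$, an $(n-1) \times (n-1)$ submatrix whose determinant is (up to sign) exactly $\prod_{j \neq i} d_j$, and then argue that the greatest common divisor of this family of $n$ determinants is $1$ precisely because the $d_i$ are pairwise coprime.

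First I would identify the natural candidate submatrices: for each fixed $i$, delete row $i$ and column $i$ from $A_{\vec d, G}$. Since $A_{\vec d, G}$ is upper triangular with diagonal entries $d_1, \dots, d_n$, deleting the $i$-th row and $i$-th column yields a matrix that is still upper triangular (the block structure of an upper-triangular matrix is preserved under deleting a symmetric row/column pair). Its determinant is therefore the product of its diagonal entries, namely $\prod_{j \neq i} d_j$. This gives $n$ explicit $(n-1)$-minors, each equal to $\prod_{j \neq i} d_j$ for $i = 1, \dots, n$.

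Next I would compute $\delta_{n-1} = \gcd_i \left( \prod_{j \neq i} d_j \right)$, where the gcd is taken over at least these $n$ minors (and $\delta_{n-1}$ divides this gcd since it is the gcd over \emph{all} $(n-1)$-minors). The key elementary number-theoretic fact is that when $d_1, \dots, d_n$ are pairwise coprime, we have $\gcd\left( \prod_{j \neq 1} d_j, \dots, \prod_{j \neq n} d_j \right) = 1$: any prime $p$ dividing $\prod_{j \neq i} d_j$ for \emph{every} $i$ would have to divide two distinct $d_k, d_\ell$ (since each product omits exactly one factor, a prime common to all $n$ products cannot be confined to a single $d_m$), contradicting pairwise coprimality. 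Hence $\delta_{n-1} \mid 1$, so $\delta_{n-1} = \alpha_{n-1} = 1$.

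I expect the only subtle point to be verifying that deleting row $i$ and column $i$ from the upper-triangular $A_{\vec d, G}$ genuinely leaves an upper-triangular matrix with the expected diagonal; this is routine but worth stating carefully, since off-diagonal entries $x_{k,\ell}$ could in principle land on the diagonal of a carelessly indexed submatrix. The number-theoretic step is clean and is the crux of \emph{why} pairwise coprimality is the right hypothesis, but it is not an obstacle so much as the heart of the argument. Note that the graph $G$ plays no role beyond keeping the matrix upper triangular, which matches the statement that $G$ is arbitrary.
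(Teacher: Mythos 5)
Your proposal is correct and follows essentially the same route as the paper: delete the $i$-th row and column for each $i$, observe the resulting submatrix is upper triangular with determinant $\prod_{j\neq i} d_j$, and conclude $\delta_{n-1}=1$ from pairwise coprimality. You actually spell out the gcd argument more explicitly than the paper does, which only asserts it; otherwise the two arguments are identical.
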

\begin{proof}
	By Proposition \ref{thm:relabeling}, we can assume that $G$ is an increasing DAG, thus $A_{\vec{d}, G}$ is upper triangular.
	Consider the $(n-1) \times (n-1)$ submatrix of $A_{\vec{d}, G}$ obtained by removing the $j$-th row and column for some $j \in [n]$. This submatrix is upper triangular, thus we have the determinant of this submatrix is
	\[\frac{\prod_{i=1}^n d_i}{d_j}.\]
	Since this is true for all $j \in [n]$, we have that the greatest common divisor of the determinants of the $(n-1) \times (n-1)$ submatrices must be 1. 
	Thus, $\delta_{n-1}(A_{\vec{d}, G}) = 1$.
\end{proof}

In Example \ref{ex:GHS_on_3}, we have that $\vec{d} = (2,5,3)$ satisfies the conditions in the above Proposition. Therefore, for any DAG $G$ on $[3]$, we have that the SNF of $A_{(2,5,3), G}$ is $\diag(1,1,30)$.

\begin{prop}\label{thm:path_SNF}
	Let $G$ be a directed path of length $n-1$ on $[n]$. 
	Then $A_{\vec{d}, G}$ has only one nonunit elementary divisor for all $\vec{d} \in \ZZ_{\geq 1}^n$.
\end{prop}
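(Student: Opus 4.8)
The plan is to reduce the problem to a single determinant computation. As noted following Theorem~\ref{thm:cyclic_cokernel}, it suffices to prove that $\delta_{n-1}(A_{\vec d, G}) = 1$, and for this it is enough to exhibit one $(n-1)\times(n-1)$ submatrix of $A_{\vec d, G}$ whose determinant is $\pm 1$, since $\delta_{n-1}$ is the greatest common divisor of all such minors. By Proposition~\ref{thm:relabeling} I may relabel the vertices so that the topological ordering presents $G$ as the path $(1 \to 2 \to \dots \to n)$; then $A_{\vec d, G}$ is the upper bidiagonal matrix whose diagonal entries are $d_1, \dots, d_n$ and whose superdiagonal entries all equal $1$, since $E(G) = \set{(i,i+1) : 1 \le i \le n-1}$.

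With this normal form in hand, I would compute the minor obtained by deleting the first column and the last row. Tracking indices, each surviving superdiagonal entry $A_{\vec d, G}[i,i+1] = 1$ lands on the main diagonal of this submatrix, while each surviving diagonal entry $d_i$ drops to the subdiagonal, and all other entries vanish. The submatrix is therefore lower triangular with every diagonal entry equal to $1$, so its determinant is exactly $1$. This forces $\delta_{n-1}(A_{\vec d, G}) = 1$, hence $\alpha_{n-1} = 1$ and $A_{\vec d, G}$ has a single nonunit elementary divisor, as claimed.

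An alternative route would apply Lemma~\ref{thm:minor_det}: in a path graph the only path from $a$ to $a+b$ is the full chain $(a \to a+1 \to \dots \to a+b)$, which has no gaps, so each $M_{a,b}$ has determinant $\pm 1$, and one could assemble these into a proof that $\delta_{n-1} = 1$. I prefer the direct deletion argument because it produces a unit minor in one stroke and avoids any case analysis. I do not anticipate a genuine obstacle; the only point demanding care is the index bookkeeping that identifies the deleted submatrix as lower triangular, which the case $n=3$ (where deleting the first column and last row of $A_{\vec d, G_3}$ yields $\left[\begin{smallmatrix} 1 & 0 \\ 5 & 1 \end{smallmatrix}\right]$, of determinant $1$) already illustrates.
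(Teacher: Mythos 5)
Your proof is correct and takes essentially the same approach as the paper: after the identical relabeling via Proposition~\ref{thm:relabeling}, the minor you compute by deleting the first column and last row is exactly the paper's $M_{1,n-1}$, which the paper evaluates to $1$ by invoking Lemma~\ref{thm:minor_det} (the unique, gap-free path from $1$ to $n$) where you instead observe lower triangularity directly. The difference is only in how the same unit minor is evaluated, and both yield $\delta_{n-1}(A_{\vec d, G}) = 1$.
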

\begin{proof}
	By Proposition \ref{thm:relabeling}, we can assume that $G$ is the path $(1 \to 2 \to \dots \to n)$.
	From Lemma \ref{thm:minor_det}, since there is only one path from 1 to $n$ and this path does not contain any gaps, we have $\det M_{1,n-1} = 1.$
	Therefore, there exists a $(n-1) \times (n-1)$ submatrix of $A_{\vec{d}, G}$ with determinant 1, hence $\delta_{n-1}(A_{\vec{d}, G}) = 1$.
\end{proof}

Note that in the previous proposition, $G$ must be exactly a path of length $n-1$. 
It is possible for the presence of additional edges beyond the path edges to cause $\alpha_{n-1} > 1$, as demonstrated in the following example.

\begin{example}\label{ex:full_spine_not_cyclic_cokernel}
	Let $\vec{d} = (6,9,12)$ and let $G$ be the DAG on [3] given by
	\[\begin{tikzpicture}
		\node[circle, draw] (1) at (0,0) {1};
		\node[circle, draw] (2) at (1,1) {2};
		\node[circle, draw] (3) at (2,0) {3};
		
		\draw[thick, ->] (1) -- (2);
		\draw[thick, ->] (2) -- (3);
		\draw[thick, ->] (1) -- (3);
		
	\end{tikzpicture}\] 
	Observe that $G$ contains the path $(1\to 2 \to 3)$, yet the SNF of $A_{\vec{d}, G}$ is $\diag(1,2,324)$. 
	Thus, we have a graph containing a path of length $2$, yet the cokernel of $A_{\vec{d}, G}$ is not cyclic.
\end{example}

In general, when the entries of $\vec{d}$ are arbitrary, knowing how each edge in $G$ will affect the SNF of $A_{\vec{d}, G}$ is more complicated. 
Example \ref{ex:full_spine_not_cyclic_cokernel} demonstrates that even with $\gcd (\vec{d})$ being nonunit, it is nontrivial to locate some graphical pattern that would imply $A_{\vec{d}, G}$ having a cyclic cokernel.
If we restrict to a constant diagonal, where $\vec{d} = (m,m,\dots,m) \in \ZZ_{\geq 1}^n$, then we observe that paths have an impact on the SNF of $A_{\vec{d}, G}$.

\begin{cor}\label{thm:full_spine}
	Let $G$ be a DAG on $[n]$ containing a path of length $n-1$. 
	If $\vec{d} = (m,m,\dots,m) \in \ZZ_{\geq 1}^n$, then the cokernel of $A_{\vec{d}, G}$ is cyclic.
\end{cor}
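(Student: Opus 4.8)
The plan is to exhibit a single $(n-1)\times(n-1)$ minor of $A_{\vec d, G}$ that is coprime to $m$, and to combine this with the fact that $\delta_{n-1}(A_{\vec d, G})$ must divide $\det A_{\vec d, G} = m^n$. As noted after the statement of Theorem~\ref{thm:cyclic_cokernel}, it suffices to prove $\delta_{n-1}(A_{\vec d, G}) = 1$. First I would reduce to a convenient labeling: since $\vec d = (m,\dots,m)$ is constant, every permutation fixes $\vec d$, so by Proposition~\ref{thm:relabeling} we may relabel the vertices of $G$ by the permutation carrying its length-$n$ path to $(1 \to 2 \to \dots \to n)$. This relabeling is a topological ordering, so $A_{\vec d, G}$ is upper triangular with every diagonal entry equal to $m$; in particular $\det A_{\vec d, G} = m^n$, whence $\delta_{n-1}(A_{\vec d, G}) \mid m^n$.

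Next I would analyze the minor $M_{1,n-1}$, the $(n-1)\times(n-1)$ submatrix obtained by deleting row $n$ and column $1$. By Lemma~\ref{thm:minor_det},
\[
\det M_{1,n-1} = \sum_{P = (1 \to \dots \to n) \in G} (-1)^{(n-1) - \ell(P) + 1} g(P),
\]
the sum running over all paths from $1$ to $n$. Because $G$ contains the full path $1 \to 2 \to \dots \to n$, exactly one term comes from a gapless path, namely the full path itself: it has $\ell(P) = n$ and no gaps, so $g(P) = 1$ and the sign is $(-1)^0 = +1$. Every other path from $1$ to $n$ omits at least one intermediate vertex, hence has at least one gap, so its contribution $g(P)$ is a product of one or more copies of $m$ and is therefore divisible by $m$.

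Consequently $\det M_{1,n-1} = 1 + mk$ for some integer $k$, so $\det M_{1,n-1} \equiv 1 \pmod m$ and in particular $\gcd(\det M_{1,n-1}, m) = 1$. Since $\delta_{n-1}(A_{\vec d, G})$ divides both $m^n$ and the minor $\det M_{1,n-1}$, it divides $\gcd(m^n, \det M_{1,n-1})$; but every prime factor of $m^n$ divides $m$ and hence does not divide $\det M_{1,n-1}$, so this gcd is $1$. Therefore $\delta_{n-1}(A_{\vec d, G}) = 1$ and the cokernel is cyclic. The degenerate case $m = 1$ is immediate, since then $\det A_{\vec d, G} = 1$ forces all elementary divisors to equal $1$.

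The computation is routine once set up; the one step requiring care is the reduction to the standard labeling, where I must confirm that a Hamiltonian path in a DAG together with constant $\vec d$ lets Proposition~\ref{thm:relabeling} place the path on the main superdiagonal without disturbing $\vec d$, so that Lemma~\ref{thm:minor_det} applies verbatim to $M_{1,n-1}$. I would also double-check the sign bookkeeping, confirming the gapless spine term is exactly $+1$ rather than $-1$, since the whole argument hinges on that term being a unit modulo $m$.
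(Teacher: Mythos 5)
Your proof is correct, but it takes a more self-contained route than the paper. The paper proves this corollary in one line by citing Theorem~\ref{thm:unique_largest_eq} (a forward reference to Section~\ref{sec:constant_diagonal}): since a DAG has at most one path of length $n$, we have $f_{u,v}(n)=1$, which is coprime to $m$, so the largest invariant factor equals $m^n = \det A_{\vec d, G}$ and all others are units. Your argument instead inlines the relevant computation directly from Lemma~\ref{thm:minor_det}: after relabeling, the minor $M_{1,n-1}$ equals $1$ plus a sum of terms each divisible by $m$ (every other path from $1$ to $n$ has a gap), so $\delta_{n-1}$ divides $\gcd(m^n, \det M_{1,n-1}) = 1$. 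This is essentially the proof of Theorem~\ref{thm:unique_largest_eq} specialized to $h=n$, but the specialization genuinely simplifies matters: you never need the lower bound $\delta_{n-1} \geq m^{n-h}$ from Theorem~\ref{thm:largest_bound}, because with $h = n$ that bound is trivial, and you only need divisibility of $\delta_{n-1}$ into $\delta_n$. You also handle $m=1$ separately, which is a nice point of care since Lemma~\ref{thm:minor_det} is stated for $\vec d \in \ZZ_{\geq 2}^n$. One step you flag but do not actually justify deserves the half-line it takes: relabeling so the Hamiltonian path becomes $(1 \to 2 \to \dots \to n)$ does yield a topological ordering of all of $G$, because an edge $(i,j)$ with $i > j$ would close a directed cycle with the segment $j \to j+1 \to \dots \to i$ of the path, contradicting acyclicity; this also shows the Hamiltonian path is the unique gapless path from $1$ to $n$, which is exactly what makes your leading term $+1$.
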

\begin{proof}
	This is an immediate consequence of Theorem \ref{thm:unique_largest_eq}, as there is only one path of length $n-1$.
\end{proof}

Note that Proposition \ref{thm:path_SNF} requires $G$ to be exactly a path of length $n-1$, whereas Corollary \ref{thm:full_spine} only requires $G$ to contain a path of length $n-1$. 

In summary, when $\vec{d}$ is pairwise coprime, the SNF is fully determined and the graph structure has no impact on the SNF of $A_{\vec{d}, G}$.
For arbitrary $\vec{d}$, the effect $G$ has on the SNF of $A_{\vec{d}, G}$ is obscured by the number theory of $\vec{d}$. And from Corollary \ref{thm:full_spine}, we see that when $\vec{d}$ is constant, the graph structure has a noticeable impact on the SNF. So, we will next focus on this case.

\section{Constant Diagonal}
\label{sec:constant_diagonal}

Recall we assume all graphs $G$ are simple acyclic directed graphs (abbr. DAG) on vertex set $[n]$.
As we are focusing on the case when our diagonal vector is constant, we write $\vec{m} \in \ZZ_{\geq 0}^n$ for the vector where each entry is a fixed integer $m$.

We first observe that in general it is not necessarily the case that the elementary divisors of $A_{\vec{m}, G}$, where $G$ is a disjoint union of directed graphs, are the same elementary divisors as the disjoint union of the elementary divisors of $A_{\vec{m}, H}$ where $H$ is a connected component of $G$. 
(However, it is straightforward to show that this is true when we restrict $m$ to be prime.)
The following family of graphs provide concrete examples of how issues can arise when $m$ is composite.

\begin{example}
	Let $C_i$ be the graph on $[i+3]$ such that it contains the path $(2 \to 3 \to \dots \to i+2)$ and the edges $(1,j)$ and $(k, i+3)$ for $2 < j \leq i+2$ and $2 \leq k < i+2$. 
	
	\[\begin{array}{ccc}
		\begin{tikzpicture}
			\node[circle, draw] (A) at (0,0) {2};
			\node[circle, draw] (B) at (0,-1) {1};
			\node[circle, draw] (C) at (2,0) {3};
			\node[circle, draw] (D) at (2,-1) {4};
			
			\draw[thick,->] (A) -- (C);
			\draw[thick,->] (A) -- (D);
			\draw[thick,->] (B) -- (C);
		\end{tikzpicture}
		&
		\begin{tikzpicture}
			\node[circle, draw] (A) at (0,0) {2};
			\node[circle, draw] (B) at (0,-1) {1};
			\node[circle, draw] (C) at (1, 0.6) {3};
			\node[circle, draw] (D) at (2,0) {4};
			\node[circle, draw] (E) at (2,-1) {5};

			\draw[thick,->] (A) -- (C); 
			\draw[thick,->] (B) -- (C); 
			\draw[thick,->] (C) -- (E); 
			\draw[thick,->] (C) -- (D);
			\draw[thick,->] (A) -- (E);
			\draw[thick,->] (B) -- (D);  
		\end{tikzpicture}
		& 
		\begin{tikzpicture}
			\node[circle, draw] (A) at (0,0) {2};
			\node[circle, draw] (B) at (0,-1) {1};
			\node[circle, draw] (C) at (1, 0.6) {3};
			\node[circle, draw] (D) at (2,0.6) {4};
			\node[circle, draw] (E) at (3,0) {5};
			\node[circle, draw] (F) at (3,-1) {6};

			\draw[thick,->] (A) -- (C); 
			\draw[thick,->] (B) -- (C); 
			\draw[thick,->] (B) -- (D); 
			\draw[thick,->] (B) -- (E);
			\draw[thick,->] (C) -- (D);
			\draw[thick,->] (D) -- (E);  
			\draw[thick,->] (A) -- (F);  
			\draw[thick,->] (C) -- (F);  
			\draw[thick,->] (D) -- (F);  
			
		\end{tikzpicture}\\
		C_1 & C_2 & C_3
	\end{array}\]
	It can be shown that the nonunit divisors of $A_{\vec{m}, C_i}$ are $\gcd(i, m), \tfrac{m^2}{\gcd(i,m)},$ and $ m^{i+1}$. 
	Observe we can have any positive integer appear as an elementary divisor of $A_{m, C_i}$ by manipulating $\gcd(i,m)$.
	For example, if $m = 6$, then the SNF of $A_{\vec{6}, C_2}$ is $\diag(1,1,2, 18, 216)$ and the SNF of $A_{\vec{6}, C_3}$ is $\diag(1,1,1,1,3,12, 1296)$.
	We therefore have that $2$ is an elementary divisor of $A_{\vec{6}, C_2}$ and $3$ is an elementary divisor of $A_{\vec{6}, C_3}$. 
	However, it is impossible for both $2$ and $3$ to be elementary divisors of the same matrix. 
	Therefore, the elementary divisors of $A_{\vec{6}, C_2 \amalg C_3}$ are not the same as the disjoint union of the elementary divisors of $A_{\vec{6}, C_2}$ and $A_{\vec{6}, C_3}$. 
\end{example}

We shall next investigate the influence of paths in $G$ on the SNF of $A_{\vec{m} , G}$. 
In particular, we shall see that the length of the longest path in $G$ controls the largest elementary divisor. 
We will show this by establishing a lower bound on $\delta_{n-1}(A_{\vec{m} , G})$. 
We begin by showing that we can edit the statement of Lemma \ref{thm:minor_det} to the following in the case of a constant diagonal.

\begin{lemma}\label{thm:constant_diag_minors}
	Let $G$ be an increasing DAG and let $M_{a,b}$ refer to the submatrix of $A_{\vec{m}, G}$ obtained by the procedure given in Lemma \ref{thm:minor_det}. Then
	\[\det M_{a,b} = \sum_{P = (a \to \dots \to a+b) \in G} (-1)^{b-\ell(P) } m^{b - \ell(P)},\]
	where $\ell(P)$ is the length of $P$.
\end{lemma}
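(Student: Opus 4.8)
The plan is to derive this directly from Lemma~\ref{thm:minor_det} by observing that, in the constant diagonal case, the weight $g(P)$ depends only on the \emph{number} of gaps of $P$, and then computing that number in terms of $b$ and $\ell(P)$.

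First I would recall that Lemma~\ref{thm:minor_det} gives
\[
\det M_{a,b} = \sum_{P = (a \to \dots \to a+b) \in G} (-1)^{b-\ell(P)+1} g(P),
\]
where $g(P) = \prod_{i \in S} d_i$ and $S \subseteq [n]$ is the set of gaps of $P$. Specializing to $\vec d = \vec m$, every factor in this product equals $m$, so that $g(P) = m^{\abs{S}}$, and the entire content of the lemma reduces to showing $\abs{S} = b - \ell(P) + 1$ for each path $P$ appearing in the sum.

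The key step is therefore a counting argument for a fixed path $P = (a \to \dots \to a+b)$. By definition, a gap is a vertex $u$ with $a < u < a+b$ and $u \notin P$. There are exactly $b-1$ vertices lying strictly between $a$ and $a+b$, namely $a+1, a+2, \dots, a+b-1$. Since $P$ visits $\ell(P)$ vertices in total and includes both of its endpoints $a$ and $a+b$, precisely $\ell(P)-2$ of these interior vertices lie on $P$. Hence the number of gaps is
\[
\abs{S} = (b-1) - (\ell(P)-2) = b - \ell(P) + 1.
\]
Substituting $g(P) = m^{b-\ell(P)+1}$ into the expression from Lemma~\ref{thm:minor_det} yields the claimed formula.

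I do not expect a substantial obstacle here: the argument is a direct specialization of an already-established result followed by an elementary count. The only point requiring care is correctly excluding the two endpoints $a$ and $a+b$ when tallying interior vertices, so that the identity $\abs{S} = b - \ell(P) + 1$ holds exactly for every path contributing to the sum, including degenerate cases such as the single edge $(a \to a+b)$ (where $\ell(P) = 2$ and all $b-1$ interior vertices are gaps) and the full path visiting every vertex (where $\ell(P) = b+1$ and there are no gaps).
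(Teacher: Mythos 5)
Your proposal is correct and follows essentially the same route as the paper's proof: specialize Lemma~\ref{thm:minor_det} to the constant diagonal, note that $g(P)$ becomes $m$ raised to the number of gaps, and count that this number equals $b-\ell(P)+1$. The paper phrases the count as ``length plus number of gaps equals $b+1$,'' while you count the $b-1$ interior vertices and subtract the $\ell(P)-2$ visited ones, but these are the same argument.
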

\begin{proof}
	From Lemma \ref{thm:minor_det}, we have 
	\[\det M_{a, b} = \sum_{P = (a \to \dots \to a+b) \in G}   (-1)^{b-\ell(P)} g(P).\]
	Since each diagonal entry is $m$, we have that $g(P)$ is $m$ raised to the power equal to the number of gaps in $P$. 
	We arrive at the desired result, once we recognize that the number of vertices in $P$ and the number of gaps of $P$ must add up to the number of integers in $[a, a+b]$, which is $b+1$.
\end{proof}

The determinant of these submatrices come from the sum over paths in $G$, however, the summand only takes into account the number of vertices in those paths. 
Let us define $f_{c,r}: \ZZ_{\geq 1} \to \ZZ_{\geq 0}$
to be the counting function such that $f_{c,r}(i)$ counts the number of paths $P = (c \to \dots \to r) \in G$ of length $i$.
We define $A_{c,r}$ to be the $(n-1) \times (n-1)$ submatrix of $A_{\vec{m} ,G}$ obtained by deleting column $c$ and row $r$.

\begin{lemma}\label{thm:n-1_minors}
	Let $G$ be an increasing DAG. If $r \neq c$, then the determinant of $A_{c,r}$ is
	\begin{equation}\label{eq:A_cr_det_sum}
		\det A_{c,r} = \sum_{P = (c \to \dots \to r) \in G} (-1)^{r-c-\ell(P)} m^{n-\ell(P)-1}.
	\end{equation}
	
	If we consider $\det A_{c,r}$ as a polynomial in $m$, then we can write
	\begin{equation}\label{eq:A_cr_det_poly}
		\det A_{c,r} = \sum_{i\geq 1} (-1)^{r-c-i} f_{c,r}(i) m^{n-i-1}.
	\end{equation}
\end{lemma}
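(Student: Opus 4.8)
The plan is to reduce the computation of $\det A_{c,r}$ to the submatrix determinant $\det M_{c,\,r-c}$ already evaluated in Lemma~\ref{thm:constant_diag_minors}, by exhibiting $A_{c,r}$ as a block upper-triangular matrix. First I would dispose of the case $c > r$: here the sum in \eqref{eq:A_cr_det_sum} is empty, since the topological ordering forbids a path from $c$ to a smaller vertex $r$, so the claim reduces to $\det A_{c,r} = 0$. This follows from the Leibniz expansion: any nonzero term requires a bijection $\sigma$ from the retained rows $[n]\setminus\{r\}$ to the retained columns $[n]\setminus\{c\}$ with $\sigma(i)\geq i$ for every $i$ (because $A_{\vec m, G}$ is upper triangular). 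But then the $n-r$ rows indexed $r+1,\dots,n$ would have to map into the $n-r-1$ columns of $\{r+1,\dots,n\}\setminus\{c\}$, which is impossible by pigeonhole; hence every term vanishes.

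For the main case $c < r$, I would observe that, keeping rows and columns in their natural increasing order, $A_{c,r}$ splits into three contiguous diagonal blocks: the leading principal block $B_1$ on rows and columns $1,\dots,c-1$; a middle block on rows $c,\dots,r-1$ and columns $c+1,\dots,r$; and the trailing principal block $B_3$ on rows and columns $r+1,\dots,n$. The two structural checks are that the three blocks strictly below the block diagonal vanish, since every such entry lies strictly below the main diagonal of the upper-triangular matrix $A_{\vec m, G}$, and that the middle block is exactly $M_{c,\,r-c}$ as defined in Lemma~\ref{thm:minor_det} (its row set $\{c,\dots,r-1\}$ and column set $\{c+1,\dots,r\}$ match the indexing of $M_{a,b}$ with $a=c$, $b=r-c$). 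Since $B_1$ and $B_3$ are upper triangular with constant diagonal $m$, block triangularity gives $\det A_{c,r} = m^{c-1}\,\det M_{c,\,r-c}\,m^{n-r}$ with no reordering and hence no extra sign.

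I would then substitute the formula of Lemma~\ref{thm:constant_diag_minors} for $\det M_{c,\,r-c}$ and collect the powers of $m$: for each path $P=(c\to\dots\to r)$ the exponent becomes $(c-1)+\bigl(r-c-\ell(P)+1\bigr)+(n-r) = n-\ell(P)$, while the sign $(-1)^{r-c-\ell(P)+1}$ is unchanged, which yields \eqref{eq:A_cr_det_sum}. Finally, \eqref{eq:A_cr_det_poly} follows by grouping the path sum according to length: all paths of a common length $i$ contribute the same sign $(-1)^{r-c-i+1}$ and the same monomial $m^{n-i}$, so their total contribution is $(-1)^{r-c-i+1} f_{c,r}(i)\, m^{n-i}$, and the sum begins at $i=2$ since any path between distinct vertices has at least two vertices.

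I expect the only delicate point to be the bookkeeping in the block decomposition: verifying that the three diagonal blocks are contiguous and correctly aligned so that no permutation of rows or columns is needed, and that the middle block coincides with $M_{c,\,r-c}$ on the nose. The exponent and sign arithmetic is then routine. As an alternative route, should the block description prove cumbersome, one can read $\det A_{c,r}$ off the adjugate: writing $A_{\vec m, G} = mI + N$ with $N$ strictly upper triangular and nilpotent gives $(A^{-1})[c,r] = \sum_{k\geq 0}(-1)^k (N^k)[c,r]\, m^{-k-1}$, where $(N^k)[c,r] = f_{c,r}(k+1)$ counts paths; multiplying by $\det A = m^n$ recovers the same formula.
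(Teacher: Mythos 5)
Your proposal is correct and follows essentially the same route as the paper: the identical block upper-triangular decomposition $\det A_{c,r} = m^{c-1}\,\det M_{c,\,r-c}\,m^{n-r}$ combined with Lemma~\ref{thm:constant_diag_minors}, then grouping paths by length (the paper dispatches the $c>r$ case by noting $A_{c,r}$ is upper triangular with a zero diagonal entry, a cosmetic difference from your Leibniz/pigeonhole argument). Your alternative sketch via $A_{\vec m,G}=mI+N$ and the adjugate is a nice independent check but is not needed.
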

\begin{proof}	
	
	If $r < c$, then we have $A_{c,r}$ is upper triangular with $A_{c,r}[r,r] = 0$. 
	Therefore, $\det A_{c,r} = 0$. Since each path in $G$ must be an increasing sequence, there are no paths from $c$ to $r$, hence the empty sum is also zero. 
	Therefore, equation \eqref{eq:A_cr_det_sum} holds. 
	
	Next, assume that $r > c$. Thus, we can write 
	\[A_{c,r} = \left[\begin{array}{c|c|c}
		B_{1,1} & B_{1,2} & B_{1,3} \\
		\hline
		0 & M_{c, r-c} & B_{2,3} \\
		\hline
		0 & 0 & B_{3,3}
	\end{array}\right],\]
	where $B_{1,1} \in \ZZ^{(c-1)\times(c-1)}$, $B_{3,3} \in \ZZ^{(n-r)\times (n-r)}$ and $M_{c,r-c}$ is the submatrix as described in Lemma \ref{thm:minor_det}. 
	Note that $B_{1,1}$ and $B_{3,3}$ are upper triangular, and so $\det B_{1,1} = m^{c-1}$ and $\det B_{3,3} = m^{n-r}$.  
	Thus, we have that 
	\begin{align*}
		\det A_{c,r} &= \det B_{1,1} (\det M_{c, r-c} )\det B_{3,3} \\
		&= \sum_{P = (c \to \dots \to r) \in G} (-1)^{r-c-\ell(P)} m^{n-\ell(P)-1}.
	\end{align*}
	
	Note that the summand only depends on the length of $P$, so we can collect terms by their lengths. Since $r\neq c$, then each path from $c$ to $r$ must have length at least 1, thus we arrive at (\ref{eq:A_cr_det_poly}).
\end{proof}

We are now able to prove the following bound on the largest elementary divisor for a directed graphical simplex with constant diagonal.

\begin{thm}\label{thm:largest_bound}
	Let $G$ be a DAG on $[n]$ and let $h$ be the length of the longest path in $G$. 
	Then the largest elemenatry divisor of $A_{\vec{m} ,G}$ is at most $m^{h+1}$.
\end{thm}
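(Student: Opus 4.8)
The plan is to control the largest invariant factor by pinning down $\delta_n$ exactly and then establishing a \emph{lower} bound on $\delta_{n-1}$. Since $A_{\vec m, G}$ is upper triangular with every diagonal entry equal to $m$, we immediately have $\delta_n(A_{\vec m, G}) = \det A_{\vec m, G} = m^n$. By the formula \eqref{eq:elementary_divisors_computation} for elementary divisors, the largest invariant factor is $\alpha_n = \delta_n/\delta_{n-1} = m^n/\delta_{n-1}$. Hence proving the theorem is equivalent to showing $\delta_{n-1}(A_{\vec m, G}) \geq m^{n-h}$, and since $\delta_{n-1}$ is by definition the greatest common divisor of all $(n-1)\times(n-1)$ minors, it suffices to show that $m^{n-h}$ divides every such minor.

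The $(n-1)\times(n-1)$ minors are exactly the determinants $\det A_{c,r}$ obtained by deleting column $c$ and row $r$, so I would split into two cases. When $r = c$, the matrix $A_{c,c}$ is upper triangular with $n-1$ diagonal entries each equal to $m$, giving $\det A_{c,c} = m^{n-1}$; since a single vertex is already a path, $h \geq 1$, so $n-h \leq n-1$ and $m^{n-h} \mid m^{n-1}$. When $r \neq c$, I would invoke Lemma~\ref{thm:n-1_minors}, which by \eqref{eq:A_cr_det_sum} gives
\[
\det A_{c,r} = \sum_{P = (c \to \dots \to r) \in G} (-1)^{r-c-\ell(P)+1} m^{n-\ell(P)}.
\]
Every path $P$ appearing here satisfies $\ell(P) \leq h$ by the definition of $h$ as the longest path length, so each summand is divisible by $m^{n-\ell(P)}$, which is in turn divisible by $m^{n-h}$. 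Thus $m^{n-h} \mid \det A_{c,r}$ in this case as well.

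Combining the two cases yields $m^{n-h} \mid \delta_{n-1}$, and because $\det A_{c,c} = m^{n-1} \neq 0$ forces $\delta_{n-1} > 0$, I would conclude $\alpha_n = m^n/\delta_{n-1} \leq m^n/m^{n-h} = m^h$, as desired. The essential content of the argument is light precisely because Lemma~\ref{thm:n-1_minors} has already rewritten each minor as a sum of powers of $m$ indexed by path lengths; the one substantive observation is the uniform bound $\ell(P) \leq h$, which keeps the minimal exponent $n - \ell(P)$ from ever dropping below $n-h$. The only point requiring genuine care is the exponent bookkeeping together with verifying that the diagonal minors $A_{c,c}$ do not secretly contribute a smaller power of $m$ than the bound permits — but this is settled at once by the remark that $h \geq 1$.
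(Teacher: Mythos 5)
Your proposal is correct and follows essentially the same route as the paper: both pin down $\delta_n = m^n$, invoke Lemma~\ref{thm:n-1_minors} to show $m^{n-h}$ divides every off-diagonal minor $\det A_{c,r}$, and handle the diagonal minors $\det A_{c,c} = m^{n-1}$ separately. Your explicit remark that $h \geq 1$ ensures $m^{n-h} \mid m^{n-1}$ is a small point the paper leaves implicit, but the argument is the same.
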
	
\begin{proof}
	By Proposition \ref{thm:relabeling}, we can assume that $G$ is an increasing DAG.
	Note that $\det A_{r,r} = m^{n-1}$ for all $r$ as $A_{r,r}$ is upper triangular. By Lemma \ref{thm:n-1_minors}, if $\det A_{c,r}$ is nonzero, then from (\ref{eq:A_cr_det_poly}), $\det A_{c,r}$ is a polynomial in $m$ with the smallest nonzero power of $m$ being at least $n-h-1$. Therefore, $m^{n-h-1}$ divides $\det A_{c,r}$ for all $c,r$. Thus $\delta_{n-1} \geq m^{n-h-1}$ and since $\delta_n = \det A_{\vec{m} ,G} = m^n$, we arrive at
	\[\alpha_n = \frac{\delta_n(A_{\vec{m}, G})}{\delta_{n-1}(A_{\vec{m}, G})} \leq m^{h+1}.\]
\end{proof}

As we will later see in the proof of Theorem \ref{thm:unique_largest_eq}, number theoretic considerations involving the number of paths of particular lengths and $m$ determine which elementary divisors appear. 
For example, suppose that there is only one vertex with indegree 0 and only one vertex with outdegree 0, and there are exactly $k$ distinct paths of maximum length $h-1$ in $G$ between these two vertices. 
Then it is possible for $\gcd(k,m)m^{n-h-1}$ to divide $\det A_{c,r}$ for all possible $(n-1)\times(n-1)$ submatrices, hence when $\gcd(k,m) > 1$, then $\alpha_n < m^{h+1}$.
The following example showcases this phenomenon. 

\begin{example}
	Let us define $B_i$ to be the DAG on $[i+2]$ where the edges $(1,k)$ and $(k, i+2)$ are in $B_i$ for $2\leq k \leq i+1$. 
	
	\[\begin{array}{cccc}
		\begin{tikzpicture}
			\node[circle, draw] (A) at (0,0) {1};
			\node[circle, draw] (B) at (1,0) {2};
			\node[circle, draw] (Z) at (2,0) {3};
			\draw[thick,->] (A) -- (B); 
			\draw[thick,->] (B) -- (Z); 
		\end{tikzpicture}
		&
		\begin{tikzpicture}
			\node[circle, draw] (A) at (0,0) {1};
			\node[circle, draw] (B) at (1,0.5) {2};
			\node[circle, draw] (C) at (1,-0.5) {3};
			\node[circle, draw] (Z) at (2,0) {4};

			\draw[thick,->] (A) -- (B); 
			\draw[thick,->] (A) -- (C); 
			\draw[thick,->] (B) -- (Z); 
			\draw[thick,->] (C) -- (Z);
		\end{tikzpicture}
		& 
		\begin{tikzpicture}
			\node[circle, draw] (A) at (0,0) {1};
			\node[circle, draw] (B) at (1,1) {2};
			\node[circle, draw] (C) at (1,0) {3};
			\node[circle, draw] (D) at (1,-1) {4};
			\node[circle, draw] (Z) at (2,0) {5};

			\draw[thick,->] (A) -- (B); 
			\draw[thick,->] (A) -- (C); 
			\draw[thick,->] (A) -- (D); 
			\draw[thick,->] (B) -- (Z); 
			\draw[thick,->] (C) -- (Z);
			\draw[thick,->] (D) -- (Z);
		\end{tikzpicture}
		& 
		\begin{tikzpicture}
			\node[circle, draw] (A) at (0,0) {1};
			\node[circle, draw] (B) at (1,1.5) {2};
			\node[circle, draw] (C) at (1,0.5) {3};
			\node[circle, draw] (D) at (1,-0.5) {4};
			\node[circle, draw] (E) at (1,-1.5) {5};
			\node[circle, draw] (Z) at (2,0) {6};

			\draw[thick,->] (A) -- (B); 
			\draw[thick,->] (A) -- (C); 
			\draw[thick,->] (A) -- (D); 
			\draw[thick,->] (A) -- (E); 
			\draw[thick,->] (E) -- (Z); 
			\draw[thick,->] (B) -- (Z); 
			\draw[thick,->] (C) -- (Z);
			\draw[thick,->] (D) -- (Z);
		\end{tikzpicture}\\
		B_1 & B_2 & B_3 & B_4
	\end{array}\]
	
	The nonunit elementary divisors for $A_{\vec{6}, B_i}$ are
	\[\begin{array}{c|r}
		B_1 & \set{216}\\
		B_2 & \set{12, 108}\\
		B_3 & \set{6,18,72}\\
		B_4 & \set{6,6, 12,108}\\
		B_5 & \set{6, 6, 6, 6, 216}
	\end{array}\]
	The length of the longest path in $B_i$ is $2$ and there are $i$ total paths of length $2$. 
	From Theorem \ref{thm:largest_bound}, we have that $6^{3} = 216$ is an upper bound for the largest elementary divisor. 
	We see that we achieve this bound only for $i = 1$ and $i=5$, which are the only two integers $i$, for which $B_i$ is listed above, that are coprime with $6$. 
	In fact, we see that the largest elementary divisor of $A_{\vec{6}, B_i}$ is of the form $\frac{6^3}{\gcd(6,i)}$.
\end{example}

One can see that $6^{3}$ is an elementary divisor of $A_{\vec{6}, B_i}$ for all $i$ such that $\gcd(i,6) = 1$. 
In fact, one can check that $m^{3}$ is an elementary divisor of $A_{\vec{m}, B_i}$ for all $i$ such that $\gcd(i,m) = 1$. 
This is not a coincidence. 
This behavior generalizes to all DAGs, as shown by the following theorem.

\begin{thm}\label{thm:unique_largest_eq}
	Let $G$ be a DAG on $[n]$ and let $h$ be the length of the longest path in $G$. 
	Suppose that $\gcd(f_{u,v}(h),m) = 1$ for some $u,v$. 
	Then the largest elementary divisor of $A_{\vec{m}, G}$ is exactly $m^{h+1}$.
\end{thm}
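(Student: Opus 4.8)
The plan is to combine the upper bound from Theorem~\ref{thm:largest_bound} with a matching lower bound, reducing the whole problem to the analysis of a single, well-chosen minor. Since $\alpha_n = \delta_n/\delta_{n-1}$ and $\delta_n = \det A_{\vec m, G} = m^n$, proving $\alpha_n = m^h$ is equivalent to proving $\delta_{n-1} = m^{n-h}$. The proof of Theorem~\ref{thm:largest_bound} already establishes the divisibility $m^{n-h} \mid \delta_{n-1}$, so the remaining task is the reverse divisibility $\delta_{n-1} \mid m^{n-h}$. I would obtain this not by bounding all minors but by exhibiting one $(n-1)\times(n-1)$ minor equal to $m^{n-h}$ times a factor coprime to $m$.

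First I would apply Lemma~\ref{thm:n-1_minors} to the submatrix $A_{u,v}$ obtained by deleting column $u$ and row $v$, where $u,v$ are the indices furnished by the hypothesis. Viewing the determinant as a polynomial in $m$,
\[
\det A_{u,v} = \sum_{i \ge 2} (-1)^{v-u-i+1} f_{u,v}(i)\, m^{n-i},
\]
I would use that $h$ is the length of the longest path in $G$ to conclude that $f_{u,v}(i) = 0$ for all $i > h$. Hence the sum truncates at $i = h$, and the smallest power of $m$ that can occur is $m^{n-h}$, carried by the single term $(-1)^{v-u-h+1} f_{u,v}(h)\, m^{n-h}$. Note that the hypothesis $\gcd(f_{u,v}(h),m)=1$ together with $m > 1$ forces $f_{u,v}(h) \neq 0$, so this lowest term is genuinely present.

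Factoring out $m^{n-h}$ then gives $\det A_{u,v} = m^{n-h}\bigl(\pm f_{u,v}(h) + m\,c\bigr)$ for some integer $c$, because every remaining term carries a factor $m^{h-i}$ with $h-i\ge 1$. The bracketed factor is therefore congruent to $\pm f_{u,v}(h) \pmod m$, and by the coprimality hypothesis it is a unit modulo $m$; in particular no prime dividing $m$ divides it. Finally I would run a prime-by-prime valuation argument: since $\delta_{n-1} \mid \delta_n = m^n$, every prime factor of $\delta_{n-1}$ divides $m$, and for each such prime $p$ with $v_p(m) = e$ the divisibility $\delta_{n-1} \mid \det A_{u,v}$ gives $v_p(\delta_{n-1}) \le (n-h)e$, while $m^{n-h}\mid\delta_{n-1}$ gives $v_p(\delta_{n-1}) \ge (n-h)e$. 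Equality for every $p \mid m$ forces $\delta_{n-1} = m^{n-h}$, whence $\alpha_n = m^h$.

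Most of the genuine work is already discharged by Lemma~\ref{thm:n-1_minors} and Theorem~\ref{thm:largest_bound}, so this argument is short. The one point I expect to require the most care is verifying that the coprimality hypothesis is used exactly where needed: it is what guarantees both that the longest-path term survives (\emph{i.e.}, $f_{u,v}(h)\neq 0$) and that the cofactor of $m^{n-h}$ is a $p$-adic unit at every prime dividing $m$, which is precisely the input that pins the valuation of $\delta_{n-1}$ to the optimal value. The bookkeeping to confirm that no lower power of $m$ can appear and that the cofactor is genuinely coprime to $m$ is the crux; everything else is a routine valuation deduction.
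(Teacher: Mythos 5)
Your proof is correct and follows essentially the same route as the paper's: both apply Lemma~\ref{thm:n-1_minors} to the minor $\det A_{u,v}$, factor out $m^{n-h}$ so that the cofactor is $\pm f_{u,v}(h) \bmod m$ and hence coprime to $m$, and combine this with the lower bound $m^{n-h} \mid \delta_{n-1}$ from Theorem~\ref{thm:largest_bound} to pin down $\delta_{n-1} = m^{n-h}$. The only cosmetic difference is how the upper bound on $\delta_{n-1}$ is finished: the paper notes $\delta_{n-1} \mid \gcd(\det A_{u,v}, \det A_{1,1})$ with $\det A_{1,1} = m^{n-1}$, while you use $\delta_{n-1}\mid\delta_n = m^n$ and a prime-by-prime valuation comparison, which is an equivalent bookkeeping step.
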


\begin{proof}	
	Assume, by Proposition \ref{thm:relabeling}, that $G$ is an increasing DAG.
	Let $u,v$ be such that $\gcd(f_{u,v}(h), m) = 1$. 
	Recall that $f_{u,v}(h)$ counts the number of paths $(u\to \dots \to v) \in G$ of length $h$. 
	Let us now define $f'_{u,v}(m)$ as the quantity
	\begin{align*}
		f'_{u,v}(m) &= \frac{\det A_{u,v}}{m^{n-h-1}} \\
		&= \sum_{i=1}^{h} (-1)^{v-u-i} f_{u,v}(i) m^{h-i} \\
		&= (-1)^{v-u-h}f_{u,v}(h) + \sum_{i=1}^{h-1} (-1)^{v-u-i} f_{u,v}(i) m^{h-i}.
	\end{align*}
	By our assumption, $\gcd(f_{u,v}(h),m) = 1$ implies that $\gcd(f'_{u,v}(m), m) = 1$. 
	By definition, $\delta_{n-1}(A_{\vec{m}, G})$ must divide $\gcd(\det A_{u,v}, \det A_{1,1})$. 
	Here, we are considering $A_{1,1}$ as this submatrix is upper triangular, which means $\det A_{1,1} = m^{n-1}$. 
	Therefore, we have that $\delta_{n-1}(A_{\vec{m}, G})$ must divide $m^{n-h-1}$. 
	In particular, $\delta_{n-1}(A_{\vec{m}, G}) \leq m^{n-h-1}$. 
	However, we know from Theorem \ref{thm:largest_bound} that $\delta_{n-1}(A_{\vec{m}, G}) \geq m^{n-h-1}$. 
	Therefore, we have $\delta_{n-1}(A_{\vec{m}, G}) = m^{n-h-1}$, hence
	\[
	\alpha_n = \frac{\delta_n(A_{\vec{m}, G})}{\delta_{n-1}(A_{\vec{m}, G})} = m^{h+1}.
	\]
\end{proof}

From our previous discussions, we see that the lengths of paths, specifically the number of paths with maximal length, has an effect on the elementary divisors of $A_{\vec{m}, G}$. 
Inspired by this, we will next explore what happens when you minimize the longest path length. 

\begin{defn}
	A \emph{bipartite DAG} $G$ is a directed graph such that $V(G) = U \amalg V$ and each edge in $G$ is of the form $(u,v)$ where $u \in U$ and $v \in V$. 
\end{defn} 

\begin{example}
	The following are bipartite DAGs with $U = \set{1,2,3}$ and $V = \set{4,5,6}$
	
	\[\begin{array}{ccc}
		\begin{tikzpicture}
			\node[circle, draw] (A) at (0, 0) {1};
			\node[circle, draw] (B) at (0,-1) {2};
			\node[circle, draw] (C) at (0,-2) {3};
			\node[circle, draw] (D) at (2, 0) {4};
			\node[circle, draw] (E) at (2,-1) {5};
			\node[circle, draw] (F) at (2,-2) {6};
			
			\draw[thick, ->] (A) -- (E);
			\draw[thick, ->] (B) -- (D);
			\draw[thick, ->] (C) -- (F);
		\end{tikzpicture}
		& 
		\begin{tikzpicture}
			\node[circle, draw] (A) at (0, 0) {1};
			\node[circle, draw] (B) at (0,-1) {2};
			\node[circle, draw] (C) at (0,-2) {3};
			\node[circle, draw] (D) at (2, 0) {4};
			\node[circle, draw] (E) at (2,-1) {5};
			\node[circle, draw] (F) at (2,-2) {6};
			
			\draw[thick, ->] (A) -- (D);
			\draw[thick, ->] (A) -- (E);
			\draw[thick, ->] (B) -- (D);
			\draw[thick, ->] (B) -- (F);
			\draw[thick, ->] (C) -- (E);
		\end{tikzpicture} 
		& 
		\begin{tikzpicture}
			\node[circle, draw] (A) at (0, 0) {1};
			\node[circle, draw] (B) at (0,-1) {2};
			\node[circle, draw] (C) at (0,-2) {3};
			\node[circle, draw] (D) at (2, 0) {4};
			\node[circle, draw] (E) at (2,-1) {5};
			\node[circle, draw] (F) at (2,-2) {6};
			
			\draw[thick, ->] (A) -- (D);
			\draw[thick, ->] (A) -- (E);
			\draw[thick, ->] (A) -- (F);
			\draw[thick, ->] (B) -- (D);
			\draw[thick, ->] (B) -- (E);
			\draw[thick, ->] (B) -- (F);
			\draw[thick, ->] (C) -- (D);
			\draw[thick, ->] (C) -- (E);
			\draw[thick, ->] (C) -- (F);
		\end{tikzpicture}
	\end{array}\]
\end{example}

It is clear by definition that bipartite DAGs have a maximum path length of $1$, and by Proposition \ref{thm:relabeling}, we can assume without loss of generality that $U = \set{1,2,3,4,\dots, k}$ and $V = \set{k+1,\dots, n}$.
From Theorem \ref{thm:largest_bound}, we know that the elementary divisors of $A_{\vec{m}, G}$ are bounded by $m^2$ when $G$ is a bipartite DAG. 
However, the fact that $G$ is bipartite actually allows us to make an even stronger claim, in particular, the SNF of the adjacency matrix of $G$ fully determines the SNF of $A_{\vec{m}, G}$.

\begin{prop}\label{thm:bipartite_elementary_divisors}
	Let G be a bipartite DAG and B be given as 
	\[
	B = \sum_{(i,j)\in E(G)} E_{i,j}. 
	\]
	
	Let $\beta_i$ be the elementary divisors of $B$ and let $\gamma_i := \gcd(m, \beta_i)$. 
	If the rank of $B$ is $r$, then the SNF of $A_{\vec{m}, G}$ is
	\[\diag\left(\gamma_1, \gamma_2, \dots, \gamma_r, m, m, \dots, m, \dfrac{m^2}{\gamma_r}, \dfrac{m^2}{\gamma_{r-1}}, \dots, \dfrac{m^2}{\gamma_{1}}\right).\]
\end{prop}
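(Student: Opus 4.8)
The plan is to reduce $A_{\vec m, G} = mI_n + B$ to a block-diagonal matrix whose SNF can be read off directly. Since $G$ is bipartite with $U = \set{1,\dots,k}$ and $V = \set{k+1,\dots,n}$ (which we may assume by Proposition~\ref{thm:relabeling}) and all edges run from $U$ to $V$, the matrix has the block form
\[
A_{\vec m, G} = \begin{bmatrix} mI_k & B' \\ 0 & mI_{n-k} \end{bmatrix},
\]
where $B' \in \ZZ^{k\times(n-k)}$ records the edges. Because $B$ is $B'$ bordered by zero blocks, $B$ and $B'$ have the same rank $r$ and the same nonzero elementary divisors $\beta_1,\dots,\beta_r$.

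The key step is a simultaneous reduction. Choose unimodular $P \in \ZZ^{k\times k}$ and $Q \in \ZZ^{(n-k)\times(n-k)}$ with $PB'Q = D$, the $k\times(n-k)$ diagonal matrix $\diag(\beta_1,\dots,\beta_r,0,\dots,0)$. Conjugating $A_{\vec m,G}$ by the block-diagonal unimodular matrix $U = \diag(P, Q^{-1})$ on the left and its inverse $U^{-1} = \diag(P^{-1}, Q)$ on the right leaves both $mI$ blocks fixed while replacing $B'$ by $D$; since $U$ and $U^{-1}$ are unimodular this preserves the SNF, so $A_{\vec m, G}$ has the same SNF as
\[
A'' = \begin{bmatrix} mI_k & D \\ 0 & mI_{n-k} \end{bmatrix}.
\]
The only off-diagonal entries of $A''$ are $A''[i,k+i] = \beta_i$ for $i \le \min(k,n-k)$, so after permuting rows and columns to place $\set{i,k+i}$ together, $A''$ is block-diagonal with $\min(k,n-k)$ blocks $\begin{bmatrix} m & \beta_i \\ 0 & m \end{bmatrix}$ and $\abs{n-2k}$ blocks $[m]$.

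Each $2\times 2$ block has first elementary divisor $\gcd(m,\beta_i) = \gamma_i$ and determinant $m^2$, hence SNF $\diag(\gamma_i, m^2/\gamma_i)$; each $1\times1$ block is $[m]$. Reducing every block to its own SNF turns $A''$ into a genuine diagonal matrix whose multiset of entries is $\set{\gamma_i, m^2/\gamma_i : 1\le i\le r}$ together with some copies of $m$, the number being $2(\min(k,n-k)-r) + \abs{n-2k} = n-2r$ in either case $k \le n-k$ or $k > n-k$. Finally I would invoke the elementary fact that a diagonal integer matrix whose entries can be arranged into a divisibility chain has exactly that chain as its SNF: the relations $\beta_1 \mid \dots \mid \beta_r$ force $\gamma_1 \mid \dots \mid \gamma_r \mid m \mid m^2/\gamma_r \mid \dots \mid m^2/\gamma_1$, yielding the claimed diagonal.

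The main obstacle is the final merging step, since the invariant factors of a block-diagonal matrix are \emph{not} in general the sorted concatenation of the blocks' invariant factors (for instance $\diag(2,3)$ has SNF $\diag(1,6)$). What rescues the argument is that here the combined multiset genuinely forms a single divisibility chain; this must be verified from $\gamma_i \mid m$ and $m \mid m^2/\gamma_i$, after which the chain is forced to be the SNF. The conjugation trick in the second step is the conceptual heart of the proof, but once seen it is routine to check; the real care lies in the bookkeeping of invariant factors and the verification of the divisibility chain.
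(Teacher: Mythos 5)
Your proof is correct, and its overall skeleton matches the paper's: pass to the block form $\begin{bmatrix} mI_k & B' \\ 0 & mI_{n-k}\end{bmatrix}$, replace $B'$ by its Smith normal form without disturbing the two scalar blocks, permute rows and columns to obtain $2\times 2$ blocks $\begin{bmatrix} m & \beta_i \\ 0 & m\end{bmatrix}$ together with singleton blocks $[m]$, reduce each $2\times 2$ block to $\diag(\gamma_i, m^2/\gamma_i)$ by the Euclidean algorithm, and finish by checking that the resulting diagonal entries form a divisibility chain. The one step you handle genuinely differently is the central reduction: the paper carries out the SNF computation for $B'$ one elementary row or column operation at a time inside $A_{\vec m, G}$, showing that each operation's side effect on the $mI_k$ block can be undone by a compensating column operation (this works precisely because that block is a scalar multiple of the identity); you instead do it in one stroke, multiplying on the left by $\diag(P, Q^{-1})$ and on the right by $\diag(P^{-1}, Q)$ where $PB'Q$ is the SNF of $B'$, so that the conjugation structure fixes both $mI$ blocks while transforming the corner block. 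Your formulation is cleaner and eliminates the operation-by-operation bookkeeping, at the cost of being slightly less elementary; both arguments are sound. You are also right to flag, and to resolve via the divisibility chain $\gamma_1 \mid \dots \mid \gamma_r \mid m \mid m^2/\gamma_r \mid \dots \mid m^2/\gamma_1$, the pitfall that invariant factors of block-diagonal matrices do not concatenate in general; the paper addresses the same point, though more tersely.
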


\begin{proof}
	Without loss of generality, assume that $V(G) = \set{1,2,\dots,k} \amalg \set{k+1,\dots,n}$. 
	Then we have $x_{i,j}$ is nonzero only if $i \leq k$ and $j > k$. Thus, we have that $A_{\vec{m}, G}$ is a block matrix of the form
	\[A_{\vec{m}, G} = \begin{bmatrix}
		mI_k & B \\
		0 & mI_{n-k}
	\end{bmatrix}.\]
	
	The following discussion demonstrates that elementary row operations on $B$ can arise from elementary row operations on $A_{\vec{m} ,G}$ without affecting the other entries in $A_{\vec{m} ,G}$. 
	An identical argument will show the same applies for elementary column operations. 
	Let $i,j \leq k$ be distinct.
	\begin{itemize}
		\item To perform the row operation $R_i \to R_i + R_j$ in $B$, we can perform this operation in $A_{\vec{m}, G}$. In the $mI_k$ block, we are changing the $[i,j]$ position from 0 to $m$. 
		Note that column $i$ of $A_{\vec{m}, G}$ has an $m$ in the $i$-th row and 0 elsewhere. 
		We can then perform the column operation $C_j \to C_j - C_i$ such that $A_{\vec{m}, G}[i,j] = 0$. 
		\item To negate the $i$-th row in $B$, we can negate the $i$-th row in $A_{\vec{m}, G}$ and then negate the $i$-th column in $A_{\vec{m}, G}$.
		\item To swap the $i$-th and $j$-th rows in $B$, we can swap the $i$-th and $j$-th rows in $A_{\vec{m}, G}$. We can then swap the $i$-th and $j$-th columns so that the top left block is unaltered.
	\end{itemize}
	
	Therefore, we know it is possible to arrive at the SNF of $B$ via elementary row and column operations on $B$, so let us perform the corresponding sequence of elementary row and columns operations on $A_{\vec{m}, G}$ until we arrive at 
	\[A_{\vec{p}, G} \to \dots \to \begin{bmatrix}
		mI_k & \diag(\beta_1,\beta_2, \dots, \beta_r, 0, \dots, 0) \\
		0 & mI_{n-k}.
	\end{bmatrix}\]
	
	We can then perform a sequence of row and column permutations of this matrix such that the resulting matrix has $r$ $2\times 2$ blocks along the main diagonal of the form
	\[
	\begin{bmatrix}
		m & \beta_i \\ 0 & m
	\end{bmatrix},
	\]
	followed by a constant $m$ diagonal. 
	Specifically, exchanging columns $2$ and $k+1$ and exchanging rows $2$ and $k+1$ will place a $2\times 2$ block with $\beta_1$ in the upper left corner of $mI_k$ and will place a $2\times 2$ block with $\beta_2$ in the upper left corner of $mI_{n-k}$.
	Iterating this process for consecutive pairs of $\beta$'s will result in the desired form. 	
	
	For each positive $i \leq r$, there exists a sequence of elementary row and column operations (essentially, applying the Euclidean algorithm) such that
	\[\begin{bmatrix}
		m & \beta_i \\ 0 & m
	\end{bmatrix} \to \dots \to  \begin{bmatrix}
		\gamma_i& 0 \\ 0 & \dfrac{m^2}{\gamma_i}
	\end{bmatrix}.\]
	
	Therefore, $A_{\vec{m}, G}$ has the same SNF as 
	\[\diag\left(\gamma_1, \gamma_2, \dots, \gamma_r, m, m, \dots, m, \dfrac{m^2}{\gamma_r}, \dfrac{m^2}{\gamma_{r-1}}, \dots, \dfrac{m^2}{\gamma_{1}}\right).\]
	
	It is straightforward to verify from the divisibility property of the $\beta$'s that each of these terms divides the subsequent term, hence this is the SNF of $A_{\vec{m}, G}$.	
\end{proof}

One special case to consider is when $m$ is prime.
For this case, we denote our diagonal vector as $\vec{p}$ for the prime $p$. 
One reason to consider a prime diagonal is that from Theorem \ref{thm:largest_bound}, the elementary divisors are bounded by $p^2$ and due to the divisibility condition on the elementary divisors, the elementary divisors can only be $1, p$ or $p^2$. 
Then Proposition \ref{thm:bipartite_elementary_divisors} yields the following corollary.

\begin{cor}
	Let $G$ be a bipartite DAG on $[n]$ and $p$ be prime. 
	Then the elementary divisors of $A_{\vec{p}, G}$ are 1, $p$, or $p^2$. 
	In particular, if
	\[B = \sum_{(i,j) \in E(G)} E_{i,j} \in (\ZZ/p\ZZ)^{n \times n},\] 
	then the number of unit elementary divisors is equal to the rank of $B$ over $\ZZ/p\ZZ$. 
	As a consequence, the number of $p^2$ elementary divisors is equal to the rank of $B$ over $\ZZ/p\ZZ$.
\end{cor}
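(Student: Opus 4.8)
The plan is to derive the corollary directly from the explicit Smith normal form supplied by Proposition \ref{thm:bipartite_elementary_divisors}, specializing the constant diagonal value to the prime $p$. That proposition gives the SNF of $A_{\vec p, G}$ as
\[
\diag\left(\gamma_1, \gamma_2, \dots, \gamma_r, p, p, \dots, p, \frac{p^2}{\gamma_r}, \dots, \frac{p^2}{\gamma_1}\right),
\]
where $r$ is the rank of $B$ over $\ZZ$, the $\beta_i$ are the integer elementary divisors of $B$, and $\gamma_i = \gcd(p, \beta_i)$. My first step is simply to observe that, since $p$ is prime, each $\gamma_i = \gcd(p, \beta_i)$ is either $1$ or $p$. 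Consequently each entry $\frac{p^2}{\gamma_i}$ is either $p^2$ or $p$, and the middle block consists entirely of $p$'s; this immediately establishes that every elementary divisor of $A_{\vec p, G}$ lies in $\{1, p, p^2\}$.

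Next I would count the unit elementary divisors. The middle block contributes only $p$'s, and each $\frac{p^2}{\gamma_i} \in \{p, p^2\}$ is a nonunit, so the unit divisors are exactly those $\gamma_i$ equal to $1$, i.e.\ those indices $i$ with $p \nmid \beta_i$. The remaining task is to identify $\#\{i : p \nmid \beta_i\}$ with the rank of $B$ over $\ZZ/p\ZZ$. For this I would reduce the integral Smith normal form of $B$ modulo $p$: the unimodular matrices realizing the SNF over $\ZZ$ reduce to invertible matrices over the field $\ZZ/p\ZZ$, so the rank of $B$ over $\ZZ/p\ZZ$ equals the number of diagonal entries $\beta_i$ that remain nonzero mod $p$, which is precisely $\#\{i : p \nmid \beta_i\}$. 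This yields the claim that the number of unit elementary divisors equals the rank of $B$ over $\ZZ/p\ZZ$.

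Finally, the statement about $p^2$ divisors is a bookkeeping consequence of the same dichotomy: since $\gamma_i = 1$ if and only if $\frac{p^2}{\gamma_i} = p^2$, the number of entries equal to $p^2$ coincides with the number of indices $i$ with $\gamma_i = 1$, hence with the rank of $B$ over $\ZZ/p\ZZ$. I expect the only genuinely delicate point to be the passage from the integral elementary divisors of $B$ to the mod-$p$ rank in the second step; everything else is a direct reading of Proposition \ref{thm:bipartite_elementary_divisors}. That point is nonetheless standard, resting on the fact that reduction mod $p$ carries $\Aut(\ZZ^n)$ into the invertible matrices over $\ZZ/p\ZZ$ and hence preserves the diagonalization, so I do not anticipate any real obstacle.
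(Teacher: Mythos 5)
Your proof is correct and follows essentially the same route as the paper: both read the Smith normal form off Proposition \ref{thm:bipartite_elementary_divisors} with $m=p$, note that each $\gamma_i=\gcd(p,\beta_i)\in\{1,p\}$, and identify the number of indices with $p\nmid\beta_i$ with the rank of $B$ over $\ZZ/p\ZZ$ by reducing the integral SNF of $B$ modulo $p$. The only (immaterial) differences are that you obtain the bound $p^2$ directly from the proposition's formula rather than citing Theorem \ref{thm:largest_bound}, and you spell out the mod-$p$ rank identification slightly more explicitly than the paper does.
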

\begin{proof}
	From Corollary~\ref{thm:largest_bound}, we have that the largest elementary divisor is at most $p^2$, so all elementary divisors must be at most $p^2$. 
	Let $\beta_i$ be the elementary divisors of $B$. 
	Note that $\gamma_i = \gcd(p, \beta_i)$ is either $1$ or $p$, and if $\gamma_i = p$ for some $i$, then $\gamma_j = p$ for all $i\leq j \leq r$. 
	Since if $p$ divides $\beta_i$ then that elementary divisor is equal to $0$ mod $p$, we have that $\gamma_i = 1$ for all $i \leq r$, where $r$ is the rank of $B$ over $\ZZ/p\ZZ$, and the rest of the $\gamma_i$ are equal to $p$.
	Thus, from Proposition~\ref{thm:bipartite_elementary_divisors}, we have that the SNF of $A_{\vec{p}, G}$ is
	\[\diag(\underbrace{1,1,1,\dots,1}_{r \text{ copies}}, \underbrace{p,p,\dots,p}_{n-2r \text{ copies}},\underbrace{p^2,p^2,\dots,p^2}_{r \text{ copies}}).\]
\end{proof}

In the proof above, $B$ can also be described as the directed adjaceny matrix of $G$. Thus, we can then claim there are $n-r$ nonunit elementary divisors of $A_{\vec{p}, G}$, where $r$ is the rank of the directed adjaceny matrix of $G$ over $\ZZ/p\ZZ$, when $G$ is bipartite. 
When considering this special case with a prime diagonal, experimental data suggests that Proposition~\ref{thm:bipartite_elementary_divisors} will apply to all graphs, not just bipartite graphs. 
In particular, whereas the maximum path length of the graph determines an upper bound for all elementary divisors, the rank of the adjacency matrix seems to be determining how the powers on $p$ are being distributed across the nonunit elementary divisors.
We end with the following conjecture which makes this observation precise.

\begin{conj}
	Let $G$ be an increasing DAG on $[n]$ and let $p$ be prime. Let $B \in \ZZ/p\ZZ^{n \times n}$ be given by
	\[B = \sum_{(i,j) \in G} E_{i,j} \in \ZZ/p\ZZ^{n\times n},\]
	and let $r$ be the rank of $B$ over $\ZZ/p\ZZ$. Then if $\alpha_i$ are the elementary divisors of $A_{\vec{p}, G}$, we have that $\alpha_{r} = 1$ and $\alpha_{r+1} > 1$.
\end{conj}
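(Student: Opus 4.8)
The plan is to reduce the statement to two standard facts about Smith normal forms, exploiting that $A_{\vec p, G}$ has determinant a prime power. Write $A_{\vec p, G} = pI_n + N$, where $N = \sum_{(i,j) \in E(G)} E_{i,j}$ is the integer $0/1$ adjacency matrix of $G$ whose reduction modulo $p$ is the matrix $B$ in the statement. Since $G$ carries the topological ordering, $N$ is strictly upper triangular, so $A_{\vec p, G}$ is upper triangular with every diagonal entry equal to $p$. Consequently $\det A_{\vec p, G} = p^n$, and because $\prod_{i=1}^n \alpha_i = \det A_{\vec p, G} = p^n$ with $p$ prime, \emph{every} elementary divisor $\alpha_i$ is a (possibly trivial) power of $p$. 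In particular, for each $i$ we have $\alpha_i = 1$ if and only if $p \nmid \alpha_i$.

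The second ingredient is the standard identity that the number of elementary divisors of an integer matrix coprime to $p$ equals the rank of its reduction modulo $p$. I would derive this directly from the paper's own formula $\alpha_i = \delta_i/\delta_{i-1}$ in \eqref{eq:elementary_divisors_computation}. Since $\delta_{i-1}$ divides $\delta_i$, the $p$-adic valuations $v_p(\delta_0) = 0 \le v_p(\delta_1) \le \cdots \le v_p(\delta_n)$ form a non-decreasing sequence, and $\alpha_i = 1$ exactly when $v_p(\delta_i) = v_p(\delta_{i-1})$. On the other hand, because $A_{\vec p, G} \equiv N \equiv B \pmod p$, the rank $r_p = \operatorname{rank}_{\FF_p}(B) = \operatorname{rank}_{\FF_p}(A_{\vec p, G} \bmod p)$ is the largest index $i$ for which some $i \times i$ minor of $A_{\vec p, G}$ is nonzero modulo $p$, i.e.\ the largest $i$ with $v_p(\delta_i) = 0$. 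By monotonicity this means $v_p(\delta_i) = 0$ for $i \le r_p$ and $v_p(\delta_i) > 0$ for $i > r_p$.

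Combining the two ingredients finishes the argument. For $i \le r_p$ both $\delta_i$ and $\delta_{i-1}$ are coprime to $p$, so $\alpha_i = \delta_i/\delta_{i-1}$ is coprime to $p$ and hence equals $1$; in particular $\alpha_{r_p} = 1$. At the step $r_p+1$ we have $v_p(\delta_{r_p}) = 0 < v_p(\delta_{r_p+1})$, so $p \mid \alpha_{r_p+1}$ and therefore $\alpha_{r_p+1} > 1$. One should check that the index $r_p+1$ is legitimate: since $N$ is strictly upper triangular it is nilpotent, so $B$ is singular and $r_p \le n-1$, guaranteeing that $\alpha_{r_p+1}$ exists. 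The degenerate case $r_p = 0$, which forces $N = 0$ and $A_{\vec p, G} = pI_n$, is handled by reading $\alpha_{r_p} = 1$ as the convention $\delta_0 = 1$ and noting $\alpha_1 = p > 1$.

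I do not expect a serious obstacle for the statement as written; the only points requiring care are justifying that all $\alpha_i$ are $p$-powers (immediate from $\det = p^n$) and the rank-versus-valuation bookkeeping. The genuinely hard problem suggested by the surrounding discussion is the \emph{strengthening} of Proposition~\ref{thm:bipartite_elementary_divisors} to arbitrary DAGs, namely predicting how the full multiset of $p$-powers is distributed among $\alpha_{r_p+1}, \dots, \alpha_n$ rather than merely locating where the unit divisors end. That finer statement does not follow from the determinant and rank alone and would require controlling the valuations $v_p(\delta_i)$ for all $i$, which is where the combinatorics of path counts, as in Theorem~\ref{thm:unique_largest_eq}, would need to re-enter.
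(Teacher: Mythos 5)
Your proof is correct, and it is worth emphasizing that the paper offers no proof to compare against: this statement is presented as a conjecture, supported only by experimental data and by the bipartite case (Proposition~\ref{thm:bipartite_elementary_divisors} and its corollary). Your argument settles the conjecture affirmatively. Both ingredients are sound: (i) since $A_{\vec p, G}$ is upper triangular with every diagonal entry $p$, we have $\prod_i \alpha_i = p^n$, so every invariant factor is a power of $p$; (ii) the number of invariant factors coprime to $p$ equals $\operatorname{rank}_{\FF_p}(A_{\vec p, G} \bmod p) = r_p$, which you derive from the paper's own formula \eqref{eq:elementary_divisors_computation} together with the facts that $v_p(\delta_i)$ is nondecreasing (because $\delta_{i-1} \mid \delta_i$, by cofactor expansion) and that $r_p$ is the largest $i$ for which some $i \times i$ minor of $A_{\vec p, G}$ is nonzero mod $p$, i.e., the largest $i$ with $v_p(\delta_i) = 0$. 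Combined with the divisibility chain $\alpha_1 \mid \alpha_2 \mid \cdots \mid \alpha_n$, this places exactly $r_p$ units at the front and forces $p \mid \alpha_{r_p+1}$; your checks that $r_p \le n-1$ (since $B$ is strictly upper triangular, hence nilpotent and singular) and of the degenerate case $r_p = 0$ close the only remaining loose ends. In effect you have observed that the conjecture is an instance of the standard fact that, for an integer matrix, the number of invariant factors prime to $p$ equals the rank of its reduction mod $p$ — a fact one can also obtain by reducing the unimodular relation $SAT = \diag(\alpha_1,\dots,\alpha_n)$ modulo $p$. Your closing remark is also apt: the genuinely open problem suggested by the authors' discussion around Theorem~\ref{thm:unique_largest_eq} is the finer one of how the powers of $p$ are distributed among $\alpha_{r_p+1},\dots,\alpha_n$, which does not follow from rank and determinant considerations alone; but the conjecture as literally stated is proved by your argument.
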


\bibliographystyle{plain}
\bibliography{ref.bib}

\end{document}